\setlist[enumerate]{label=\arabic*.}
\numberwithin{equation}{section}
\newcommand{\R}{\mathbb{R}}
\newcommand{\N}{\mathbb{N}}
\newcommand{\bS}{\mathbb{S}}
\newcommand{\HH}{\mathcal{H}}
\newcommand{\LL}{\mathcal{L}}
\newcommand{\dd}{\mathrm{d}}
\newcommand{\D}{\mathrm{D}}
\newcommand{\SBV}{\mathrm{SBV}}
\newcommand{\BV}{\mathrm{BV}}
\newcommand{\GSBV}{\mathrm{GSBV}}
\newcommand{\AT}{\mathcal{AT}}
\newcommand{\MS}{\mathcal{MS}}
\newcommand{\1}{\mathbbm{1}}
\newcommand{\G}{\mathcal{G}}
\newcommand{\Om}{\Omega}
\newcommand{\eps}{\varepsilon}
\newcommand{\wto}{\rightharpoonup}
\newcommand{\half}{\frac{1}{2}}
\newcommand{\tforall}{\text{for all }}
\newcommand{\tfor}{\text{for }}
\newcommand{\loc}{\text{loc}}
\newcommand{\coloneq}{\mathrel{\mathop:}=}
\newcommand{\abs}[1]{\lvert #1 \rvert}
\newcommand{\bigabs}[1]{\bigl\lvert #1 \bigr\rvert}
\newcommand{\norm}[1]{\lVert #1 \rVert}
\newcommand{\bignorm}[1]{\bigl\lVert #1 \bigr\rVert}
\newcommand{\Bignorm}[1]{\Bigl\lVert #1 \Bigr\rVert}
\newcommand{\biggnorm}[1]{\biggl\lVert #1 \biggr\rVert}
\newcommand{\scprod}[2]{\langle #1 , #2 \rangle}
\newcommand{\bigscprod}[2]{\bigl\langle #1 , #2 \bigr\rangle}
\newcommand{\biggscprod}[2]{\biggl\langle #1 , #2 \biggr\rangle}
\DeclareMathOperator*{\argmin}{arg\,min}
\DeclareMathOperator{\dist}{dist}
\DeclareMathOperator{\supp}{supp}
\DeclareMathOperator*{\Glim}{\Gamma-lim}
\DeclareMathOperator*{\Gliminf}{\Gamma-lim\,inf}
\DeclareMathOperator*{\Glimsup}{\Gamma-lim\,sup}
\DeclareMathOperator*{\essinf}{ess\,inf}
\DeclareMathOperator*{\esssup}{ess\,sup}
\DeclareMathOperator{\diver}{div}
\DeclareMathOperator{\prox}{prox}
\theoremstyle{plain}
\newtheorem{theorem}{Theorem}[section]
\newtheorem{lemma}[theorem]{Lemma}
\newtheorem{corollary}[theorem]{Corollary}
\newtheorem{proposition}[theorem]{Proposition}
\theoremstyle{definition}
\newtheorem{assumption}[theorem]{Assumption}
\theoremstyle{remark}
\newtheorem{remark}[theorem]{Remark}
\title[$\BV$-phase field approximation for the Mumford-Shah functional]{Approximation of the Mumford-Shah Functional by Phase Fields of Bounded Variation}
\author[S. Belz]{Sandro Belz}
\address{Department of Mathematics, Technical University of Munich\\Boltzmannstraße 3, 85748 Garching, Germany\\sandro.belz@tum.de}
\author[K. Bredies]{Kristian Bredies}
\address{Institute of Mathematics and Scientific Computing, University of Graz\\Heinrichstraße 36, 8010 Graz, Austria\\kristian.bredies@uni-graz.at}
\begin{document}

\begin{abstract}
  In this paper we introduce a new phase field approximation of the Mumford-Shah functional similar to the well-known one from Ambrosio and Tortorelli. However, in our setting the phase field is allowed to be a function of bounded variation, instead of an $H^1$-function. In the context of image segmentation, we also show how this new approximation can be used for numerical computations, which contains a total variation minimization of the phase field variable, as it appears in many problems of image processing. A comparison to the classical Ambrosio-Tortorelli approximation, where the phase field is an $H^1$-function, shows that the new model leads to sharper phase fields.
\end{abstract}

\keywords{Mumford-Shah, free-discontinuity problem, $\Gamma$-convergence, phase field, image segmentation, image denoising}

\subjclass[2010]{49J45, 26A45, 68U10}

\maketitle

\section{Introduction}

The Mumford-Shah functional has been introduced by D. Mumford and J. Shah in \cite{MumSha1989} in the context of image segmentation. For a given image, $g \in L^\infty(\Om)$, where $\Om \subset \R^n$ represents the image domain, it is given by
\begin{equation}
  \label{strongMS}
  \frac{\alpha}{2} \int_\Omega \abs{\nabla u}^2 \, \dd x + \frac{\beta}{2} \int_{\Omega} \abs{u-g}^2 \,\dd x + \gamma \HH^1 (\Gamma)
\end{equation}
where $\alpha, \beta, \gamma >0$ are parameters, free to choose. One wants to minimize the functional with respect to $u \in C^1(\Omega \setminus \Gamma)$, being the segmentally denoised approximation of $g$, and $\Gamma \subset \Om$ closed, describing the contours of the segments.
For $\beta = 0$ this functional appeared once more in \cite{FraMar1998}  in the context of fracture mechanics. There, $u$ models the displacement function, and $\Gamma  \subset \Om$ being closed represents the fracture set. The minimization is then restricted to some Dirichlet boundary condition.

As usual in the theory of free-discontinuity problems (see \cite{AmbFusPal2000,Bra1998}) the Mumford-Shah functional \eqref{strongMS} is relaxed to the space of special functions of bounded variation (see Section~\ref{functionsOfBoundedVariation} for more details on these functions), where the set $\Gamma$ is replaced by the discontinuity set $S_u$. Namely, instead of \eqref{strongMS} one considers
\begin{equation}
  \label{SBV_MS}
  \frac{\alpha}{2} \int_\Omega \abs{\nabla u}^2 \, \dd x + \frac{\beta}{2} \int_{\Omega} \abs{u-g}^2 \,\dd x + \gamma \HH^1 (S_u)
\end{equation}
for $u\in \SBV(\Om)$, the set of special functions of bounded variation.
In this setting the existence of the minimizers is well-known  and follows from compactness properties of $\SBV(\Om) \cap L^\infty(\Om)$ and some lower semi-continuity properties (see \cite{Amb1995,Amb1989,Amb1990}), using the direct method in the calculus of variations.
Furthermore, by the regularity property shown in \cite{DeCarLea1989} we know that for any minimizer  $u \in \SBV(\Om)$ of %
\eqref{SBV_MS}
the pair  $(u,\bar S_u)$ minimizes~\eqref{strongMS}.

As already mentioned, in the case of fracture mechanics, we usually have $\beta = 0$ and $L^2$\nobreakdash-penalization is replaced by a Dirichlet boundary condition. In general, the functional must then be defined on $\GSBV(\Om)$, the set of generalized special functions of bounded variation (see Section~\ref{functionsOfBoundedVariation}), in order to obtain the existence of a minimizer. This is due to the requirement of a uniform bound of the minimizing sequence in the direct method for applying the above-mentioned compactness properties in $\SBV(\Om)$. Only for $\beta >0$, this $L^\infty$-bound is automatically achieved, whereas for $\beta = 0$ one has to fall back to $\GSBV(\Om)$.

For numerical computations some variational approximations in terms of $\Gamma$-convergence (see Section~\ref{GammaConvergence}) turned out to be very useful.  It guarantees that a convergent sequence of minimizers of the approximating functionals converge to minimizers of the $\Gamma$\nobreakdash-limit.
We firstly discuss in Theorem~\ref{mainTheorem} an approximation for $\beta=0$. Hence, we consider the functional
\begin{equation*}
  \MS(u) \coloneq  \frac{\alpha}{2}  \int_\Omega \abs{\nabla u}^2 \, \dd x + \gamma \HH^1 (S_u) \quad \tforall u\in \GSBV(\Om) \,.
\end{equation*}

One of the first and most popular results in this direction was given by L. Ambrosio and V. M. Tortorelli in \cite{AmbTor1992}. They introduced the functionals
\begin{equation}
  \label{AmbrosioTortorelli}
  \mathcal{AT}_\eps (u,v) = \int_\Omega (v^2 + \eta_\eps) \abs{\nabla u}^2 \, \dd x + \int_\Omega \frac{1}{4 \eps}  (1-v)^2 + \eps  \abs{\nabla v}^2 \,\dd x
\end{equation}
for $u \in H^1(\Om)$ and $v\in H^{1} (\Om; [0,1])$ and showed via a $\Gamma$-convergence argument that any limit point $(u,1)$ of a sequence of minimizers $(u_ \eps , v_\eps)$ of $\AT_\eps$ is a minimizer of $\MS$, provided that $\frac{\eta_\eps}{\eps} \to 0$.
Many other approximations based on this result have been proven. Just recently, we proved that the Euclidean norms of the gradients can be replaced by  Riemannian norms (see \cite{AlmBelMicPer2018}). This result finds application in fracture mechanics applied to surfaces. Another approach considering higher order terms of the phase field has been studied e.g. in \cite{BorHugLanVer2014} and \cite{BurEspZep2015}.
What happens with the approximation $\AT_\eps$ when $\frac{\eta_\eps}{\eps}$ does not converge to zero is investigated in \cite{DalIur2013} and \cite{Iur2013}.
A totally different idea of approximating $\MS$ by finite differences was proposed by E. De Giorgi and proven by M. Gobbino in \cite{Gob1998}. In \cite{BraDal1997} A. Braides and G. Dal Maso used non-local functionals depending on the average of the gradient of $u$ on small balls.
From the work presented in \cite{Bra1998} one gets an approximation of $\MS$ for the following functional with small $\eps >0$:
\begin{equation}
  \label{ATp}
  \int_\Omega (v^2 + \eta_\eps) \abs{\nabla u}^2 \, \dd x +  \frac{1}{2^{p'} \eps} \int_\Omega (1-v)^{p'} \,\dd x + \eps^{p-1} \int_\Omega \abs{\nabla v}^p \,\dd x
\end{equation}
for $u \in H^1(\Om)$ and $v \in W^{1,p}(\Om)$ with $p>1$ and $p'$ being the
Hölder conjugate of $p$.

In the approximations \eqref{AmbrosioTortorelli}, \eqref{ATp} and in the functionals we are going to study in this paper the additional function $v$ works as a phase field variable describing the discontinuity set of $u$. To be more precise, for small $\eps > 0$ the minimizing function $v$ is close to $0$ where $u$ is ``steep'' or jumps, which means in the context of fracture mechanics the presence of a crack and in the context of image segmentation the presence of a contour. Elsewhere, the phase field variable is close to 1 and $u$ is expected to be ``flat'' in this area. In practice the weights of the different integral terms declare what is meant to be ``steep'' or ``flat''.

In this paper we present a new approximation of the Mumford-Shah functional, allowing the phase field variable $v$ to be in $\BV(\Om)$, the set of functions of bounded variation. Namely, as a special case of our main result we consider the functionals
\begin{equation*}
  \frac{\alpha}{2} \int_\Om (v^2 + \eta_\eps)
    \abs{\nabla u}^2 \, \dd x + \frac{\gamma}{2 \eps} \int_\Om (1-v) \,\dd x +
    \frac{\gamma}{2} \abs{\D v}(\Om)
\end{equation*}
for $u \in H^1(\Om)$ and $v\in \BV(\Om)$, which $\Gamma$-converge in some sense to $\MS$ and represent the case with $p=1$ in \eqref{ATp}. From there we derive the required setting for $\beta >0$.

In this way the phase field variable $v$ can have jumps, which is exploited in the proof of the $\limsup$-inequality (see Proposition~\ref{limsupInequality}), when constructing the recovery sequence for the $\Gamma$\nobreakdash-convergence result. Moreover, we expect from this fact that the phase fields become somewhat sharper than the ones obtained from  \eqref{AmbrosioTortorelli}. We approve this expectation with some numerical computations in the context of image segmentation. The application of this theory to fracture mechanics remains for future work, which includes the studying the convergence behaviour of a time-discrete evolution as it was done -- and is still ongoing -- for the classical approach of Ambrosio and Tortorelli. For more details on this topic we refer to \cite{Gia2005,AlmBelNeg2018,AlmBel2018,Neg2014,Neg2019,KneNeg2017,AlmNeg2020}.

The paper is structured as follows: In Section~\ref{preliminaries} we start with some preliminaries recalling the necessary technical issues. For the versed reader this section might be skipped or only used as a reference text.  In Section~\ref{sec:main-result} we formulate our main result, Theorem~\ref{mainTheorem}, from which we directly infer all other necessary theorems and corollaries.  Section~\ref{proof} is then dedicated to the proof of Theorem~\ref{mainTheorem} and in Section~\ref{numerics} we provide some  numerical comparison of our new  model and the classical Ambrosio-Tortorelli approximation.

\section{Preliminaries and Notation}
\label{preliminaries}
In this section, we collect the notation and the well-known results from the literature which are used in this paper.

With $B_\rho (x)$ we denote the Euclidean ball with radius $\rho > 0$ and center $x \in \R^n$.  For some $S\subset \R^n$ the set $B_\rho(S)$ refers to the $\rho$\nobreakdash-neighborhood of $S$. The set $\bS^{n-1}$ is the $n-1$-dimensional sphere in $\R^n$. At some places it  is convenient to use the short notation $a \vee b$ and $a \wedge b$ for $\max\{a,b\}$  and $\min\{a, b\}$, respectively.

The essential supremum and the essential infimum of some measurable function~$u$ is written as $\esssup u$ and $\essinf u$, respectively. The essential support of a measurable function $u$ is denoted by $\supp u$.

\subsection{Measure theory}
For any set $\Om \subset \R^n$ we denote by $\LL^n (\Om)$ the $n$\nobreakdash-dimensional Lebesgue measure and by $\HH^k (\Om)$ the $k$\nobreakdash-dimensional Hausdorff measure. Instead of $\HH^0$ we also use the symbol $\#$ for the counting measure.  For a (signed, vector-valued) measure $\mu$ we write $\abs{\mu}$ for its total variation.

\subsection{\texorpdfstring{$\Gamma$-convergence}{Gamma-convergence}}
\label{GammaConvergence}
For some sequence of functionals $(F_j)$ and a functional $F$ defined on some metric space~$X$ we say that $F_j$   \emph{$\Gamma$\nobreakdash-converges} to $F$ as $j \to \infty$ and write $\Glim_{j \to \infty} F_j = F$ if there holds the
\begin{description}
\item[$\boldsymbol{\liminf}$-inequality]
  for all $u\in X$ and all sequences $(u_j)$ in $X$ with $u_j \to u$ there holds
  \begin{equation}\label{liminfInequality}
    F(u) \leq \liminf_{j\to\infty} F_j (u_j) \,.
  \end{equation}
\item[$\boldsymbol{\limsup}$-inequality]
  for all $u \in X$ there exists a sequence $(u_j)$ in $X$ such that $u_j \to u$ and
  \begin{equation}\label{limsupInequality}
    \limsup_{j\to \infty} F_j(u_j) \leq F(u) \,.
  \end{equation}
\end{description}
One often defines
\begin{align*}
  \Gliminf_{j \to \infty} F_j (u) &\coloneq \inf \{ \liminf_{j \to \infty} F_j (u_j) \colon u_j \in X \text{ for all } j > 0, u_j \to u \text{ as } j \to \infty \} \,, \\
  \Glimsup_{j \to \infty} F_j (u) &\coloneq \inf \{ \limsup_{j \to \infty} F_j (u_j) \colon u_j \in X \text{ for all } j > 0, u_j \to u \text{ as } j \to \infty \} \,.
\end{align*}
Then the $\liminf$-inequality is equivalent to $F \leq \Gliminf_{j \to \infty} F_j$ and the $\limsup$-inequality is equivalent to $\Glimsup_{j \to 0} F_j \leq F$. Note that~$\Gliminf_{j \to \infty} F_j$ as well as~$\Glimsup_{j \to \infty} F_j$ are lower semi-continuous.

If one has a family of functionals $(F_\eps)$ for $\eps \in I \subset \R$ the definition is adapted in the usual way, i.e. $F_\eps$ $\Gamma$-converges to $F$ as $\eps \to a$ (for some $a\in \bar I $) if $F_{\eps_j}$ $\Gamma$-converges to $F$ for all sequences $(\eps_j)$ in $I$ with $\eps_j \to a$.

The most important property of $\Gamma$\nobreakdash-convergent sequences is the convergence of minimizers to a minimizer of the limit functional, which is stated in the following proposition.
\begin{proposition}
	\label{prop:minconvergence}
  Let $(F_\eps)$, where $F_\eps \colon X \to \R \cup \{\infty \}$, be a sequence of functionals $\Gamma$\nobreakdash-converging to $F\colon X \to \R \cup \{\infty \}$ with respect to the metric space $X$. Assume that $\inf_X F_\eps = \inf_K F_\eps$  for some compact set $K\subset X$. Then, there holds $\lim_{\eps\to 0} \inf_X F_\eps = \inf_X F$. Furthermore, for any sequence $(u_\eps)$ in $X$ converging to $u \in X$ with $F_\eps (u_\eps)  =\inf_X F_\eps $ we have $F (u) = \inf_X F$.
\end{proposition}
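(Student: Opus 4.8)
The plan is to prove the two halves of the equality $\lim_{\eps \to 0} \inf_X F_\eps = \inf_X F$ separately --- this being the standard fundamental theorem of $\Gamma$-convergence --- and then to read off the statement on minimizers as a short consequence. Throughout I would write $m_\eps \coloneq \inf_X F_\eps = \inf_K F_\eps$.

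First I would establish the upper bound $\limsup_{\eps \to 0} m_\eps \leq \inf_X F$ using only the $\limsup$-inequality. Fix any $x \in X$; by the $\limsup$-inequality there is a recovery sequence $x_\eps \to x$ with $\limsup_{\eps \to 0} F_\eps(x_\eps) \leq F(x)$. Since $m_\eps \leq F_\eps(x_\eps)$ for every $\eps$, passing to the $\limsup$ yields $\limsup_{\eps \to 0} m_\eps \leq F(x)$. As $x \in X$ was arbitrary, taking the infimum over $x$ gives $\limsup_{\eps \to 0} m_\eps \leq \inf_X F$. This step does not use compactness at all.

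The lower bound $\inf_X F \leq \liminf_{\eps \to 0} m_\eps$ is where the hypothesis $\inf_X F_\eps = \inf_K F_\eps$ enters, and this is the crux of the argument. I would pick a sequence $\eps_k \to 0$ along which $m_{\eps_k} \to \liminf_{\eps \to 0} m_\eps$, and for each $k$ choose a near-minimizer $x_{\eps_k} \in K$ with $F_{\eps_k}(x_{\eps_k}) \leq m_{\eps_k} + \eps_k$; this is possible precisely because the infimum is realized as an infimum over the compact set $K$. Compactness of $K$ then furnishes a further subsequence (not relabelled) along which $x_{\eps_k} \to x$ for some $x \in K$. Applying the $\liminf$-inequality to this convergent sequence gives $F(x) \leq \liminf_k F_{\eps_k}(x_{\eps_k}) \leq \liminf_k (m_{\eps_k} + \eps_k) = \liminf_{\eps \to 0} m_\eps$, whence $\inf_X F \leq F(x) \leq \liminf_{\eps \to 0} m_\eps$. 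Combining the two bounds forces $\liminf_{\eps \to 0} m_\eps = \limsup_{\eps \to 0} m_\eps = \inf_X F$, so the limit exists and equals $\inf_X F$.

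Finally, for the statement on minimizers, suppose $x_\eps \to x$ with $F_\eps(x_\eps) = m_\eps$. Applying the $\liminf$-inequality to this sequence and using the limit just established, $F(x) \leq \liminf_{\eps \to 0} F_\eps(x_\eps) = \lim_{\eps \to 0} m_\eps = \inf_X F$. Since trivially $\inf_X F \leq F(x)$, equality holds and $x$ minimizes $F$. The only genuine obstacle is the extraction of a convergent near-minimizing sequence in the lower-bound step, for which the compactness hypothesis is indispensable; everything else is a direct application of the two defining inequalities of $\Gamma$-convergence.
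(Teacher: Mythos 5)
Your proof is correct. Note that the paper itself states this proposition without proof, treating it as the standard ``fundamental theorem of $\Gamma$-convergence'' and referring to the cited monographs of Braides and Dal Maso; your argument --- recovery sequences for the upper bound $\limsup_{\eps\to 0}\inf_X F_\eps \leq \inf_X F$, compactness of $K$ plus the $\liminf$-inequality applied to a convergent subsequence of near-minimizers for the lower bound, and the $\liminf$-inequality once more for the statement on minimizers --- is exactly the standard proof one finds there, with the compactness hypothesis entering precisely where you say it does.
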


If $F = \Glim_{j \to \infty} F_j$ and $u\in X$, a sequence $(u_j)$, for which \eqref{limsupInequality} holds, is called a \emph{recovery sequence} for $u$, and there clearly holds $\lim_{j \to \infty} F_j (u_j) = F(u)$.
It is actually the case that a convergent sequence of minimizers is a recovery sequence for the minimizer of the $\Gamma$-limit. For this reason  knowing the recovery sequences provides lots of information about the structure of the limit behaviour of the functional sequence.

For more details on the concept of $\Gamma$\nobreakdash-convergence we refer to \cite{Bra2002} and \cite{Dal1993}.

\subsection{Functions of bounded variation}
\label{functionsOfBoundedVariation}
In the following we describe the concept and some essential results of functions of bounded variation. For an extensive monograph on this topic we refer to \cite{AmbFusPal2000}. A more basic introduction can be found in \cite{EvaGar1992}.

Let $\Om \subset \R^n$ be non-empty and open for the rest of this section. The set of functions of bounded variation, in short $\BV(\Om)$, contains all functions $u\in L^1(\Om)$ whose distributional derivative is a Radon measure, denoted by $\D u$, i.e. there holds
\begin{equation}\label{BVpartialIntegration}
  \int_\Om u \diver w \,\dd x = - \int_\Om w \,\dd \D u \quad \tforall w \in C^1_c (\Om; \R^n) \,.
\end{equation}

Defining the total variation
\begin{equation}
  \label{variation}
  V(u,\Om) = \sup \biggl\{ \int_\Om u \diver w \, \dd x : w \in C_c^1(\Om; \R^n), \norm{w}_\infty \leq 1 \biggr\}
\end{equation}
we obtain from the Riesz representation theorem that \eqref{BVpartialIntegration} is equivalent to\linebreak $V(u, \Om) < \infty$. Furthermore, there holds $\abs{\D u} (\Om) = V(u, \Om)$ for all $u\in \BV( \Om)$.

For any measurable function $u \colon \Om \to \R$ we define for all $x \in \Om$ the upper and lower approximate limit, respectively, by
\begin{align*}
  u^+(x) &= \inf \biggl\{ t \in \R : \lim_{\rho \to 0}\frac{\LL^n \bigl( \{u > t\} \cap B_\rho (x) \bigr) }{\rho^{n}}  = 0\biggr\}\,, \\
  u^-(x) &= \sup \biggl\{ t \in \R : \lim_{\rho \to 0}\frac{\LL^n \bigl( \{u < t\} \cap B_\rho (x) \bigr) }{\rho^{n}} =0 \biggr\}\,.
\end{align*}
For all $x\in \Om$ there obviously holds $u^-(x) \leq u^+(x)$. If $u^- (x) = u^+(x)$ we write for their common value $u^\ast (x)$.  The set $S_u$ is the discontinuity set containing all those points $x\in \Om$ for which there holds $u^-(x) < u^+(x)$.

In what follows let $u\in \BV(\Om)$. Then, $S_u$ has Lebesgue measure 0 and for $\HH^{n-1}$-almost all points $x\in S_u$ one can find a unit normal vector $\nu_u(x)$ such that $u^+ (x)= \bigl(u\rvert_{H^+(x)}\bigr)^\ast (x)$ and $u^- (x)= \bigl( u\rvert_{H^-(x)} \bigr)^\ast (x)$ with
\begin{align*}
  H^+(x) &= \bigl\{y \in \Om \colon \bigscprod{y-x}{\nu_u(x)} > 0 \bigr\} \\
  H^-(x) &= \bigl\{y \in \Om \colon \bigscprod{y-x}{\nu_u(x)} < 0 \bigr\} \,.
\end{align*}
If this is the case one says that $x$ is a jump point.
We call $\tilde u$ a \emph{precise representative} of $u$ if  $\tilde u (x) = u^\ast (x)$ for all~$x \in \Om \setminus S_u$ and  $ \tilde u (x)= \half (u^+(x) + u^-(x))$ for all jump points $x \in S_u$.

For functions of bounded variation on the real line we actually have that every point in $S_u$ is a jump point. Furthermore, on an open interval the pointwise variation of $\tilde u$ and the variation as defined in \eqref{variation}  coincide. Precisely, for $ a < b$ and $u \in\BV(a,b)$ there holds
\begin{equation}
  \label{pointwiseVariation}
  V \bigl(u,(a,b)\bigr) = \sup \Biggl\{ \sum_{i=1}^N \bigabs{\tilde u (t_{i})  - \tilde u( t_{i-1})}  : N \in \N, a < t_0 < \dots < t_N < b \Biggr\} \,.
\end{equation}

For any $u\in \BV (\Om)$ one can split the measure $\D u$ in the following way
\begin{equation*}
  \D u = \D^a u + \D^j u + \D^c \,,
\end{equation*}
where  $\D^a u = \nabla u \LL^n$ denotes the absolutely continuous part of $\D u$ with respect to the Lebesgue measure. Therefore, with $\nabla u$ we denote its density function, which we also call the \emph{approximate gradient} of $u$.
With $\D^j u $ we denote the  \emph{jump part} of $u$, which can be written as $\D^j u= (u^+ - u^-) \cdot \nu_u \HH^{n-1} \llcorner S_u$, and $\D^c u$ is the Cantor part.

The set of special functions of bounded variation, denoted by $\SBV(\Om)$, contains those functions of bounded variation whose Cantor part is zero, i.e.~we have~$\SBV(\Om) = \{ u\in \BV(\Om) : \D^c u = 0 \}$. The singular part of such functions is therefore only concentrated on the set of jump points.

A measurable function $u\colon \Om \to \R$ is a generalized special function of bounded variation, where we write $u\in \GSBV(\Om)$, if any truncation of $u$ is locally a special function of bounded variation, i.e.  $u^M \in \SBV_\loc (\Om)$ for all $M > 0$, with $u^M = (-M) \vee u \wedge M$. Note that for $u\in \GSBV(\Om)$ we cannot define $\nabla u$ as above, because the distributional derivative does not need to be a measure on that space. However, $\nabla u^M$ is well defined for all $M>0$ and converges pointwise a.e.~as $M\to \infty$. Thus, we simply define $\nabla u (x) = \lim_{M\to \infty} \nabla u^M(x)$ for a.e.~$x\in \Om$. Furthermore, one can show that $S_u = \bigcup_{M>0} S_{u^M}$. These results and more details can be found in~\cite[Section~4.5]{AmbFusPal2000} and the references therein.

Moreover, we will use the following two subspaces of $\GSBV(\Om)$ and $\SBV(\Om)$ defined for every $p>0$ by
\begin{align*}
	\SBV^p (\Om) &= \bigl\{ u \in \SBV (\Om) \colon \nabla u \in L^p (\Om), \HH^{n-1} (S_u) < \infty \bigr\} \\
	\GSBV^p (\Om) &= \bigl\{ u \in \GSBV (\Om) \colon \nabla u \in L^p (\Om), \HH^{n-1} (S_u) < \infty  \bigr\} \,.
\end{align*}

A density result, which plays an important role in the proof of the $\limsup$-inequality for our main assertion, is stated in the next theorem. It follows directly from \cite[Theorem~3.1]{CorToa1999} and the following remarks therein.
\begin{theorem} \label{densityResult}
  Let $\Om \subset \R^n$ be non-empty, open and bounded with Lipschitz boundary, and take $u \in \SBV^2 (\Om) \cap L^\infty (\Om)$. Then, there exists a sequence  $(w_j)$ in $\SBV^2 (\Om) \cap L^\infty (\Om)$ such that
  \begin{enumerate}
  \item \label{firstDensityProp} $\displaystyle \overline{S_{w_j}} \text{ is a polyhedral set,}$
  \item $\displaystyle \HH^{n-1} \bigl(\overline{S_{w_j}} \setminus S_{w_j} \bigr) = 0 \,,$
  \item \label{middleDensityProp} $w_j \in W^{1,\infty} (\Om \setminus S_{w_j}) \text{ for all } j\in \N \,,$
  \item $\displaystyle w_j \to u \text{ in } L^1(\Om) \text{ as } j \to \infty \,,$
  \item \label{L2-convergence} $\displaystyle \nabla w_j  \to \nabla u \text{ in } L^2 (\Om) \text{ as } j \to \infty \,,$
  \item \label{lastDensityProp} $\displaystyle \HH^{n-1} (S_{w_j}) \to \HH^{n-1} (S_w) \text{ as } j \to \infty \,.$
  \end{enumerate}
\end{theorem}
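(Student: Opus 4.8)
The plan is to obtain Theorem~\ref{densityResult} as a more or less direct consequence of the density theorem of Cortesani and Toader \cite{CorToa1999}. Their result states that the subspace $\mathcal{W}(\Om)$ of $\SBV^2(\Om)$ consisting of those $w$ whose jump set $\overline{S_w}$ is a finite union of $(n-1)$-simplices with $\HH^{n-1}(\overline{S_w}\setminus S_w)=0$, and which are $C^\infty$ (hence, on the bounded set $\Om$, $W^{1,\infty}$) on $\Om\setminus S_w$, is dense in $\SBV^2(\Om)$ with respect to exactly the three convergences stated in (4)--(6). Thus my first step is simply to record that the defining properties of $\mathcal{W}(\Om)$ coincide with properties (1)--(3), so that the entire content of the theorem reduces to approximating a given $u\in\SBV^2(\Om)\cap L^\infty(\Om)$ by elements of $\mathcal{W}(\Om)$, together with propagating the $L^\infty$-bound.

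For completeness I would recall the mechanism that yields such a sequence. Since $S_u$ is $(n-1)$-rectifiable with $\HH^{n-1}(S_u)<\infty$, one covers $S_u$ up to arbitrarily small $\HH^{n-1}$-measure by finitely many small regions in which $S_u$ is well approximated by pieces of hyperplanes; replacing these pieces by $(n-1)$-simplices produces a polyhedral set whose total measure is controlled. After flattening the jump set locally, one smooths the two one-sided traces of $u$ separately by mollification (with a reflection across the flattened interface), obtaining a function that is smooth off the polyhedral set with controlled approximate gradient. The bookkeeping is arranged so that the resulting $w_j\to u$ in $L^1(\Om)$ and $\nabla w_j\to\nabla u$ in $L^2(\Om)$, giving (4) and (5).

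The $L^\infty$-refinement then comes essentially for free from this construction: the approximants are built from local mollifications of the (bounded) one-sided traces of $u$, and convolution against a probability kernel does not increase the supremum norm, so $\norm{w_j}_{L^\infty(\Om)}\le\norm{u}_{L^\infty(\Om)}$ and in particular $w_j\in\SBV^2(\Om)\cap L^\infty(\Om)$. On the bounded domain $\Om$ the $C^\infty$-regularity of $w_j$ away from the polyhedral set $\overline{S_{w_j}}$ yields the $W^{1,\infty}(\Om\setminus S_{w_j})$-bound demanded in (3), and properties (1)--(2) are retained verbatim since no truncation disturbing the jump set is performed.

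The genuinely delicate point --- and the reason the result is cited rather than reproved --- is property (6), namely the \emph{convergence} of the jump-set measures and not merely a one-sided estimate. The inequality $\HH^{n-1}(S_u)\le\liminf_j\HH^{n-1}(S_{w_j})$ is automatic from Ambrosio's lower semicontinuity theorem in $\SBV$ \cite{AmbFusPal2000}, using $w_j\to u$ in $L^1(\Om)$ together with the uniform bounds on $\norm{w_j}_{L^\infty(\Om)}$, $\norm{\nabla w_j}_{L^2(\Om)}$ and $\HH^{n-1}(S_{w_j})$. What must be engineered in the construction is the matching upper bound $\limsup_j\HH^{n-1}(S_{w_j})\le\HH^{n-1}(S_u)$, i.e.\ the polyhedralization of $S_u$ must not create asymptotically superfluous interface; this is precisely the careful covering estimate of \cite{CorToa1999}, and it is the step I expect to be the main obstacle in any self-contained argument.
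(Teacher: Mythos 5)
Your proposal takes exactly the paper's route: the paper offers no self-contained proof, but obtains the theorem directly from \cite[Theorem~3.1]{CorToa1999} and the remarks following it (which supply the $L^\infty$-preservation, since the approximants there satisfy $\norm{w_j}_{L^\infty(\Om)} \leq \norm{u}_{L^\infty(\Om)}$), precisely as you argue. Your added sketch of the polyhedralization/mollification mechanism and of why (6) is the delicate point is accurate but goes beyond what the paper records, which is just the citation.
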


We now shortly introduce the concept of slicing, which is essential for the proof of the $\liminf$-inequality. For that purpose, let $\Omega \subset \R^n$ be open and bounded, and let $\xi \in \bS^{n-1}$ be a unique normal vector. Then, we write $\Om_\xi$ for the projection of $\Om$ onto $\xi^\perp$, and we set
\[
  \Om^\xi_y \coloneq \{ t\in \R \colon y + t \xi \in \Omega \} \quad \tforall y\in \Om_\xi \,.
\]
Furthermore, for any function $u\in L^1 (\Om)$ and for $\LL^{n-1}$\nobreakdash{-}a.a. $y \in \Om_\xi$ we can define $u_y^\xi (t) \coloneq u(y + t \xi)$ for a.a. $t\in \Om^\xi_y$.

One can show the following important results revealing the connection between a function $u\in \SBV (\Om)$ and its sliced functions $u_y^\xi$. There are more general results for $\BV$-functions, which are not needed in this context. The interested reader can find the details in \cite[Section~3.11]{AmbFusPal2000}.
\begin{theorem}\label{slicingProp}
  Let $u\in L^1(\Om)$. Then $u\in \SBV(\Om)$ if and only if for all $\xi \in \bS^{n-1}$ there holds $u_y^\xi \in \SBV (\Om_y^\xi)$ for $\LL^{n-1}$\nobreakdash-a.a. $y\in \Om_{\xi}$ and
  \[
    \int_{ \Om_\xi } \bigabs{\D u_y^\xi} \bigl( \Om_y^\xi \bigr) \dd \LL^{n-1} (y) < \infty \,.
  \]
  Furthermore, if $u\in \BV(\Om)$ there holds for all $\xi \in \bS^{n-1}$, for $\LL^{n-1}$\nobreakdash{-}a.a. $y\in \Om_\xi$ and for a.a. $t\in \Om_y^\xi$:
  \begin{enumerate}
  \item
    $(u_y^\xi)' (t) = \bigscprod{\nabla u (y + t\xi)}{\xi} $,
  \item
    $S_{u_y^\xi} = (S_u)_y^\xi$,
  \item$
    (u_y^\xi)^{\pm} (t) = u^{\pm} (y + t\xi)$,
  \item
    $\bigabs{\scprod{\D^\ast u}{\xi}} (\Om) = \int_{\Om_{\xi}} \bigabs{\D^\ast u_y^\xi} (\Om^\xi_y) \,\dd \LL^{n-1}( y) \quad \text{ for } \ast = a,j,c$.
  \end{enumerate}
\end{theorem}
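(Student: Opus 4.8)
The plan is to derive the statement from the one-dimensional slicing theory for $\BV$-functions and then transfer the Lebesgue decomposition of the directional derivative onto the slices. Fix $\xi \in \bS^{n-1}$. First I would establish the purely $\BV$-level slicing identity: for $u \in \BV(\Om)$ the slices $u_y^\xi$ lie in $\BV(\Om_y^\xi)$ for $\LL^{n-1}$-a.a.\ $y \in \Om_\xi$, and the sliced derivatives disintegrate the directional measure,
\[
  \scprod{\D u}{\xi}(B) = \int_{\Om_\xi} \D u_y^\xi (B_y^\xi)\,\dd\LL^{n-1}(y)
\]
for every Borel set $B \subset \Om$, together with the total-variation version $\abs{\scprod{\D u}{\xi}}(\Om) = \int_{\Om_\xi}\abs{\D u_y^\xi}(\Om_y^\xi)\,\dd\LL^{n-1}(y)$. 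The standard route is mollification: for $u_\delta = u * \rho_\delta$ the identity $\partial_\xi u_\delta(y + t\xi) = \bigl( (u_\delta)_y^\xi \bigr)'(t)$ is an elementary one-dimensional statement, and one passes to the limit $\delta \to 0$, using lower semicontinuity of the pointwise variation on $\LL^{n-1}$-a.a.\ slice for one inequality and the distributional definition $\int_\Om u\,\partial_\xi \phi\,\dd x = -\int_\Om \phi\,\dd\scprod{\D u}{\xi}$ tested against $\phi \in C^1_c(\Om)$ together with Fubini for the reverse.

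With the disintegration in hand, I would match the three-part decomposition $\D u_y^\xi = (u_y^\xi)'\LL^1 + \D^j u_y^\xi + \D^c u_y^\xi$ against $\scprod{\D u}{\xi} = \scprod{\nabla u}{\xi}\LL^n + \scprod{\D^j u}{\xi} + \scprod{\D^c u}{\xi}$. For the absolutely continuous part, Fubini applied to $\scprod{\nabla u}{\xi}\LL^n$ identifies the density of the a.c.\ part of $\D u_y^\xi$ with $\bigscprod{\nabla u(y + t\xi)}{\xi}$ for a.a.\ $(y,t)$, which is item (1). For the jump part I would use that $S_u$ is countably $\HH^{n-1}$-rectifiable with approximate normal $\nu_u$ and one-sided traces $u^\pm$; slicing a rectifiable set in direction $\xi$ shows that for a.a.\ $y$ the slice $(S_u)_y^\xi$ is a countable set equal to $S_{u_y^\xi}$, and that the one-sided approximate limits of $u_y^\xi$ agree with $u^\pm(y + t\xi)$. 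This gives items (2) and (3) directly, and, via the coarea factor $\abs{\scprod{\nu_u}{\xi}}$ appearing when $\HH^{n-1}\llcorner S_u$ is sliced, the case $\ast = j$ of item (4). The case $\ast = c$ then follows because the disintegration respects the mutual singularity of the three parts, so the remaining singular mass is carried exactly by $\D^c u_y^\xi$.

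It remains to read off the $\SBV$ characterization. If $u \in \SBV(\Om)$ then $\D^c u = 0$, hence $\scprod{\D^c u}{\xi} = 0$, and the case $\ast = c$ of item (4) forces $\abs{\D^c u_y^\xi}(\Om_y^\xi) = 0$ for $\LL^{n-1}$-a.a.\ $y$, i.e.\ $u_y^\xi \in \SBV(\Om_y^\xi)$; finiteness of $\int_{\Om_\xi}\abs{\D u_y^\xi}(\Om_y^\xi)\,\dd\LL^{n-1}(y) = \abs{\scprod{\D u}{\xi}}(\Om)$ is immediate. Conversely, from the hypothesis applied to the coordinate directions $e_1,\dots,e_n$, the integrability of the sliced variations gives that each distributional partial derivative is a finite measure, so $u \in \BV(\Om)$; then $\D^c u_y^\xi = 0$ for a.a.\ $y$ together with the $\ast = c$ disintegration gives $\scprod{\D^c u}{e_i} = 0$ for each $i$, and since $\scprod{\D^c u}{e_i}$ is the $i$-th component of the vector measure $\D^c u$ this forces $\D^c u = 0$, so $u \in \SBV(\Om)$.

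The step I expect to be the main obstacle is the second paragraph, namely showing that the Lebesgue decomposition of $\scprod{\D u}{\xi}$ slices compatibly into the pointwise decomposition of the $\D u_y^\xi$. Controlling the jump part rests on the rectifiability of $S_u$ and on an a.e.-$y$ statement identifying the slice of the rectifiable jump set with the jump set of the slice, carrying the correct one-sided traces; isolating the Cantor part requires ruling out any leakage of mass between the absolutely continuous, jump, and Cantor parts under slicing. By contrast, the absolutely continuous identification and the final $\SBV$-reduction are routine once the disintegration is available.
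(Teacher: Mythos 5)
A point of order first: the paper does not prove this theorem at all --- it is recalled as a known preliminary, with the proof delegated to \cite[Section~3.11]{AmbFusPal2000}. So your proposal can only be compared with the standard literature argument, and its overall architecture does match that argument: mollification and Fubini for the disintegration of $\scprod{\D u}{\xi}$ into the measures $\D u_y^\xi$, identification of the three parts of the Lebesgue decomposition slice by slice, and then the $\SBV$ reduction. Your first and third paragraphs are sound; in particular the converse half of the $\SBV$ characterization (first deduce $u \in \BV(\Om)$ from the coordinate directions via the distributional definition and Fubini, then kill $\D^c u$ componentwise using the $\ast = c$ case of item (4)) is exactly right.

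The genuine gap is in your second paragraph, at the point you yourself flag as the main obstacle --- the tools you propose there do not overcome it. Rectifiability of $S_u$ and the coarea formula give only one half of item (2), namely $(S_u)_y^\xi \subset S_{u_y^\xi}$ for a.a.~$y$ with matching traces, and correspondingly only the inequality $\bigabs{\scprod{\D^j u}{\xi}}(\Om) \leq \int_{\Om_\xi} \bigabs{\D^j u_y^\xi}(\Om_y^\xi) \,\dd\LL^{n-1}(y)$. The hard half is the reverse inclusion: $u_y^\xi$ must have no jump points (atoms of $\D u_y^\xi$) outside $(S_u)_y^\xi$, equivalently the slices of $\scprod{\D^c u}{\xi}$ must be non-atomic for a.a.~$y$. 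This does not follow from ``the disintegration respects the mutual singularity of the three parts'': mutual singularity of measures on $\Om$ says nothing about where the atoms of their slices sit, which is precisely what has to be proved. Nor does it follow from the fact that $\abs{\D^c u}$ vanishes on $\HH^{n-1}$-$\sigma$-finite sets: the set of atoms of the slices has countable sections in the direction $\xi$, but such a set need not be $\sigma$-finite with respect to $\HH^{n-1}$; indeed, a finite measure concentrated on the graph over $\xi^\perp$ of a sufficiently irregular function (a Brownian path, say) vanishes on every $\HH^{n-1}$-$\sigma$-finite set and yet every one of its slices is a Dirac mass. So ruling out ``leakage'' genuinely uses the $\BV$ structure, not just abstract measure theory. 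The standard way to close this step --- and the way the cited reference does it --- is to prove the statement first for characteristic functions of sets of finite perimeter (Vol'pert's theorem: for a.a.~$y$ the essential boundary of the sliced set equals the slice of the essential boundary), and then recover general $u$ by integrating over the levels $\{u>s\}$ with the coarea formula: an atom of $\D u_y^\xi$ at a point of approximate continuity of $u$ would place that point in $\partial^\ast \{u>s\}$ for a whole interval of levels $s$, which the level-set geometry excludes for a.a.~$y$. Without this reduction (or an equivalent fine-properties argument for good representatives), items (2) and (4) for $\ast = j,c$ --- and with them the $\SBV$ equivalence --- do not close.
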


The following corollary directly follows by a truncation argument.
\begin{corollary} \label{slicingCor}
  Let $u\in L^1(\Om)$. Then $u\in \GSBV (\Om)$ if and only if for all $\xi \in \bS^{n-1}$ there holds $u_y^\xi \in \SBV (\Om_y^\xi)$ for $\LL^{n-1}$\nobreakdash-a.e. $y\in \Om_{\xi}$ and
  \begin{equation*}
    \int_{ \Om_\xi } \bigabs{\D \bigl( (-M) \vee u_y^\xi \wedge M \bigr)} \bigl( \Om_y^\xi \bigr) \dd \LL^{n-1} (y) < \infty \quad \tforall M > 0 \,.
  \end{equation*}
\end{corollary}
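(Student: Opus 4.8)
The plan is to reduce the statement to the $\SBV$ slicing characterization of Theorem~\ref{slicingProp} by applying it to each truncation $u^M \coloneq (-M) \vee u \wedge M$, using the defining property $u \in \GSBV(\Om) \iff u^M \in \SBV_\loc(\Om)$ for all $M > 0$. The linchpin is the elementary observation that truncation commutes with slicing: since both are pointwise operations, for every $M > 0$, every $\xi \in \bS^{n-1}$, $\LL^{n-1}$-a.e.\ $y \in \Om_\xi$ and a.e.\ $t \in \Om_y^\xi$ one has
\[
  (u^M)_y^\xi(t) = u^M(y + t\xi) = (-M) \vee u(y+t\xi) \wedge M = \bigl( (-M) \vee u_y^\xi \wedge M \bigr)(t) \,.
\]
Hence the quantity appearing in the Corollary is exactly $\abs{\D (u^M)_y^\xi}(\Om_y^\xi)$, and the integral condition is precisely the slicing integrability of the function $u^M$.

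For the direction $(\Leftarrow)$ I would fix $M \in \N$ and a direction $\xi$. The assumed section regularity $u_y^\xi \in \SBV(\Om_y^\xi)$ for a.e.\ $y$ passes to truncations, because in one dimension the truncation of an $\SBV$ function is again $\SBV$: truncation is $1$-Lipschitz, so by \eqref{pointwiseVariation} it cannot increase the pointwise variation, and by the chain rule \eqref{chainRule} it cannot create a Cantor part. Combined with the commutation identity this gives $(u^M)_y^\xi \in \SBV(\Om_y^\xi)$ for a.e.\ $y$, while the integral hypothesis furnishes $\int_{\Om_\xi} \abs{\D (u^M)_y^\xi}(\Om_y^\xi)\,\dd\LL^{n-1}(y) < \infty$. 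Since $u^M$ is bounded and $\Om$ is bounded, $u^M \in L^1(\Om)$, so the ``if'' part of Theorem~\ref{slicingProp}, which asks for the section condition for \emph{all} $\xi$, yields $u^M \in \SBV(\Om)$. As this holds for every $M$, the definition gives $u \in \GSBV(\Om)$.

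For the direction $(\Rightarrow)$ I would start from the membership $u^M \in \SBV(\Om)$ of each globally integrable truncation and invoke the ``only if'' part of Theorem~\ref{slicingProp} for each $u^M$: for every $\xi$ it produces an $\LL^{n-1}$-null set $N_M^\xi \subset \Om_\xi$ off which $(u^M)_y^\xi \in \SBV(\Om_y^\xi)$, together with the integral bound, which via the commutation identity is exactly the asserted condition on the truncated sections of $u$. Taking the countable union $N^\xi \coloneq \bigcup_{M \in \N} N_M^\xi$ (still null) and using monotonicity of truncation in $M$ to reach all real $M > 0$, I obtain for $y \notin N^\xi$ that every truncation $(u_y^\xi)^M$ lies in $\SBV(\Om_y^\xi)$; passing $M \to \infty$ and using $\abs{\D (u_y^\xi)^M}(\Om_y^\xi) \nearrow \abs{\D u_y^\xi}(\Om_y^\xi)$ together with lower semicontinuity of the variation and the stability of a vanishing Cantor part then upgrades $u_y^\xi \in \GSBV(\Om_y^\xi)$ to $u_y^\xi \in \SBV(\Om_y^\xi)$ whenever the limiting variation is finite, which the integral conditions guarantee for a.e.\ $y$.

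The main obstacle I anticipate is not geometric but a matter of careful bookkeeping. First, the exceptional null sets depend on both $M$ and $\xi$, which is resolved by taking countable unions over integer $M$ and exploiting monotonicity to cover arbitrary $M > 0$. Second, and more delicate, is the passage between the \emph{local} regularity $u^M \in \SBV_\loc(\Om)$ supplied by the definition of $\GSBV$ and the \emph{global} $\SBV(\Om)$ membership required to apply Theorem~\ref{slicingProp}: this is exactly where the finiteness of the slicing integrals is essential, since through the last identity of Theorem~\ref{slicingProp} (the one relating $\abs{\D^\ast u_y^\xi}$ to $\abs{\scprod{\D^\ast u}{\xi}}$) the integral bound controls the total variation of $u^M$ over all of $\Om$ in every direction. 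Once this finiteness is secured, all remaining steps are the direct, pointwise translation furnished by the commutation identity.
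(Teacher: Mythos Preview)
Your approach---reduce to Theorem~\ref{slicingProp} applied to each truncation $u^M$, using that slicing and truncation commute---is precisely the ``truncation argument'' the paper invokes, and the paper gives no further detail. Your $(\Leftarrow)$ direction is correct and is in fact the only direction the paper actually uses (in the proof of Proposition~\ref{multDimLiminf}).

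There is, however, a genuine gap in your $(\Rightarrow)$ direction. You begin by assuming $u^M \in \SBV(\Om)$ globally, but the paper's definition of $\GSBV$ only supplies $u^M \in \SBV_\loc(\Om)$. Your proposed fix in the final paragraph---using the slicing integral to control $\abs{\D u^M}(\Om)$ and thereby upgrade to global $\SBV$---is circular: in the $(\Rightarrow)$ direction that integral bound is the \emph{conclusion}, not a hypothesis. The same circularity affects your claim that ``the integral conditions guarantee'' finiteness of $\abs{\D u_y^\xi}(\Om_y^\xi)$ for a.e.~$y$.

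In fact this gap cannot be repaired, because the $(\Rightarrow)$ implication as stated is false. On $\Om = (0,1)^2$ let $u(x_1,x_2)$ equal $1$ when $x_2 \in \bigcup_{k\geq 1} \bigl(\tfrac{1}{2k+1},\tfrac{1}{2k}\bigr)$ and $0$ otherwise. Then $u$ is bounded and on every compact $K \subset \Om$ only finitely many horizontal jump lines meet $K$, so $u^M \in \SBV_\loc(\Om)$ for every $M>0$ and hence $u \in \GSBV(\Om)$. But for $\xi = (0,1)$ every slice $u_y^\xi$ has infinitely many unit jumps accumulating at $0$, so $u_y^\xi \notin \BV(\Om_y^\xi)$ and the displayed integral is infinite already for $M=1$. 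Since the paper only needs the $(\Leftarrow)$ direction this does not affect its main results; your correct handling of that direction is what matters here.
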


\subsection{Convex functions}
\label{convexAnalysis}
Especially, for the numerical part of this paper we also need some theory about convex functions. A good reference for this topic is \cite{HirLem2001} and \cite{EkeTem1999}.

For $\Omega \subset \R^n$ the characteristic function $\chi_\Omega$ over $\Omega$ is given by $\chi_\Omega = 0$ on $\Omega$ and $\chi_\Omega = + \infty$ on $\R^n \setminus \Omega$. It is a convex function if and only if $\Om$ is a convex set. For any function $f \colon \Omega \to \R$, bounded from below by some affine function, $f^\ast \colon \R^n \to \R$ denotes its convex conjugate, i.e.
\begin{equation*}
  f^\ast (y) = \sup_{x\in \R} \bigl( \scprod{x}{y} - f(x)\bigr) \quad \tforall s\in \R^n
\end{equation*}
where $f$ is set to $+\infty$ outside of $\Om$. This definition directly yields Fenchel's inequality, which says
\begin{equation}
  \label{fenchelsInequality}
  \scprod{x}{y} \leq f(x) + f^\ast(y) \quad \tforall x,y \in \R^n \,.
\end{equation}
We remark that $f^\ast$ is always convex and lower semi-continuous and the biconjugate $f^{\ast\ast} = (f^\ast)^\ast$ is the lower semi-continuous convex hull of $f$. Furthermore, $f$ is convex and lower semi-continuous  if and only if $f = f^{\ast \ast}$.

We will also make use of the subdifferential of a function $f\colon \R^n \to (-\infty,+\infty]$, which we denote by $\partial f$. It is given by
\begin{equation*}
	\partial f (x) = \{z \in \R^n  \colon  f(x) - f(y) \leq \scprod{z}{x-y} \text{ for all } y \in \R^n \} \quad \text{for all } x\in\R^n \,.
\end{equation*}
If $f$ is differentiable in $x \in \R^n$, we have $\partial f(x) = \{\nabla f(x) \}$. %

\section{Main Result}
\label{sec:main-result}

For our main result we need several, quite technical assumptions. In order to keep a better overview we first list them here.
\begin{assumption}
  \label{mainAssumptions}
  Let $\eps_0 > 0$. For each $0 < \eps < \eps_0$ let
  \begin{enumerate}[label={[A\arabic*]}]
  \item
    \label{assumptionW}
    $W_\eps \colon [0,1] \to [0, \infty)$ be continuous such that
    $W_\eps \to W$ in $L^1([0,1])$ as $\eps \to 0$ for some $W \in L^1([0,1])$
    with $1 \in \supp W$.
  \item
    \label{assumptionPhi}
    $\varphi_\eps \colon W_\eps ([0,1]) \to \R$ be a convex function such that $\varphi_\eps(W_\eps(1)) \to 0$ and $\varphi_\eps (W_\eps (\cdot)) \to + \infty$ uniformly on $[0,T]$ for all $0 < T <1$, i.e. for all $C>0$ there exists $0 < \tilde \eps < \eps_0$ such that $\varphi_\eps (W_\eps(t)) > C$ for all $t \in [0,T]$ and $\eps < \tilde \eps$.
  \item
    \label{assumptionPsi}
    $\psi_\eps \colon [0,\infty) \to [0,\infty)$ be a convex function such that
    $\lim_{t \to \infty} \frac{\psi_\eps(t)}{t} = c_\eps < \infty$,
    $\psi_\eps (0) \to 0$ and $c_\eps \to c_0 := \int_0^1 W(s) \;\dd s$ as $\eps \to 0$ for $W$ from \ref{assumptionW}, $\varphi^\ast_\eps \leq \psi_\eps$ on $[0,\infty)$, where $\varphi^\ast_\eps$ denotes the convex conjugate of $\varphi_\eps$ (see Section~\ref{convexAnalysis}), and $\psi_\varepsilon(t) \geq  ct + d$ for all $t \geq 0$ and some $c > 0$, $d \in \R$ independent of $\eps$.
  \item
    \label{assumptionEta}
    $\eta_\eps > 0$ such that $\eta_\eps \varphi_\eps(W_\eps(0)) \to 0$ as $\eps \to 0$.
  \end{enumerate}
  Furthermore, assume that
  \begin{enumerate}[resume,label={[A\arabic*]}]
  \item
    \label{assumptionf}
    $f \colon [0,1] \to [0,\infty)$ is a Lipschitz continuous, non-decreasing function with $f(0) = 0$ and $f > 0$ on $(0,1]$.
  \end{enumerate}
\end{assumption}

We are now ready to state our main theorem.
\begin{theorem}
  \label{mainTheorem}
  Let $\Om \subset \R^n$ be a non-empty, open, bounded set with Lipschitz boundary, let $W_\eps$, $\varphi_\eps$, $\psi_\eps$, $\eta_\eps$, $f$  and $c_\eps, c_0>0$ be given as in Assumption~\ref{mainAssumptions}. For each $\eps >0$, we define the functional $F_\eps \colon L^1(\Om) \times L^1 (\Om) \to \R$ by
  \begin{multline}
    \label{ApproxFunc}
    F_\eps (u,v) \coloneq \int_\Om \bigl( f(v) + \eta_\eps \bigr)  \abs{\nabla u}^2 \,\dd x + \int_\Om \varphi_\eps \bigl( W_\eps (v) \bigr) +  \psi_\eps \bigl( \abs{\nabla v} \bigr) \,\dd x \\
    + c_\eps \bigl( \abs{\D^j v}(\Om) + \abs{\D^c v}(\Om) \bigr)
  \end{multline}
  for all $u\in H^1(\Om), v \in \BV(\Om; [0,1]) $ and $F_\eps (u,v) \coloneq +\infty$ otherwise.

  Moreover, define $F \colon L^1(\Om) \times L^1 (\Om) \to \R$ by
  \[
    F (u,v) \coloneq \left\{
      \begin{aligned}
        &\int_\Om f (1) \abs{\nabla u}^2 \,\dd x + 2 c_0 \HH^{n-1} (S_u) && \text{for } u\in \GSBV^2(\Om), v=1 \text{ a.e.,} \\
        &{+ \infty} && \text{otherwise.}
      \end{aligned} \right.
  \]
  Then $F = \Glim_{\eps \to 0} F_\eps$ with respect to the strong topology in $L^1(\Om) \times L^1(\Om)$.
\end{theorem}

For our application in image segmentation we aim for a minimization of $\eqref{SBV_MS}$. In the following corollary we add the missing $L^2$\nobreakdash-penalization term in the functionals~$F$ and $F_\eps$. %
\begin{corollary}
	\label{corollary2}
	Let $\Om \subset \R^n$ be a non-empty, open, bounded set with Lipschitz boundary, let $W_\eps$, $\varphi_\eps$, $\psi_\eps$, $\eta_\eps$, $f$  and $c_\eps, c_0>0$ be given as in Assumption~\ref{mainAssumptions}. Furthermore, let $\beta >0$ and $g\in L^\infty(\Om)$, and let $F_\eps$ and $F$ be given as in Theorem~\ref{mainTheorem}. We define for every $\eps > 0$ the functional
	\begin{equation*}
	G_\eps (u,v) \coloneq
	\left\{
	\begin{aligned}
	&F_\eps(u,v) + \frac\beta2 \int_\Om \abs{u-g}^2 \,\dd x  && \text{for } u\in H^1(\Om), v \in \BV(\Om; [0,1]),\\
	&+ \infty &&\text{otherwise.}
	\end{aligned}
	\right.
	\end{equation*}
	Moreover, we define
	$G \colon L^1(\Om) \times L^1 (\Om) \to \R$ by
	\begin{equation*}
	G (u,v) \coloneq
	\left\{
	\begin{aligned}
	&F(u,v) + \frac\beta2 \int_\Om \abs{u - g}^2  \, \dd x && \text{for } u\in \SBV^2 (\Om) \cap L^\infty (\Om), v=1 \text{ a.e.,} \\
	&+\infty  && \text{otherwise.}
	\end{aligned}
	\right.
	\end{equation*}
	Then, $G = \Glim_{\eps \to 0} G_\eps$ with respect to the strong topology in $L^1(\Om) \times L^1(\Om)$.
\end{corollary}

\begin{proof}
	Since $u \mapsto \int_\Om \abs{u-g}^2 \,\dd x $ is lower semi-continuous, the $\liminf$-inequality follows directly from Theorem~\ref{mainTheorem}. Hence, we have
	\begin{equation}
	\label{eq10}
	G(u,v)\leq \Gliminf_{\eps \to 0} G_\eps(u,v) \quad \text{for all } u,v \in L^1(\Om) \,.
	\end{equation}
	In order to show the $\limsup$-inequality it suffices to consider $u\in \SBV^2(\Om) \cap L^\infty(\Om)$ and $v=1$ a.e., since otherwise, the left hand side of \eqref{eq10} is $+\infty$ and there is nothing to show.

	From Theorem~\ref{mainTheorem} we know that there exists a sequence $(u_\eps,v_\eps)$ in $H^1(\Om) \times \BV(\Om)$ converging to $(u,v)$ as $\eps \to 0$ in the strong $L^1(\Om) \times L^1(\Om)$-topology such that
	\begin{equation*}
	\lim_{\eps \to 0} F_\eps (u_\eps , v_\eps) = F(u,v) \,.
	\end{equation*}
	We consider the truncated function sequence $u_\eps^M $ with $M=\norm{u}_{L^\infty}$, and note that $u_\eps^M \to u$ in $L^2(\Om)$ as $\eps \to 0$. Therefore, we also have
	\begin{equation*}
	  \lim_{\eps \to 0} \int_\Om \abs{u_\eps^M -g}^2 \,\dd x =   \int_\Om \abs{u -g}^2 \,\dd x \,.
	\end{equation*}
	Furthermore, one can easily verify that  $F_\eps (u_\eps ^M,v_\eps ) \leq F_\eps (u_\eps, v_\eps)$, so that
	\begin{equation*}
	\limsup_{\eps \to 0} G_\eps(u_\eps ^M,v_\eps) \leq \limsup_{\eps \to 0} F_\eps(u_\eps,v_\eps) + \frac\beta2  \limsup_{\eps \to 0} \int_\Om \abs{u_\eps^M -g}^2 \, \dd x  = G(u,v) \,,
	\end{equation*}
	which is the required $\limsup$-inequality.
\end{proof}

In view of Proposition~\ref{prop:minconvergence} the existence and compactness of minimizers of the approximating functionals  $G_\eps$ needs to be shown in order to obtain their convergence to a minimizer of the functional $G$.  We give a rigorous proof in the following theorem.
\begin{theorem}
  \label{thm:convergenceOfMinimizers}
	In the setting of Corollary~\ref{corollary2} a minimizer of $G_\eps$ exists for every $\eps >0$.  Furthermore, let $\eps_j$ be an infinitesimal sequence, and let  $(u_{\eps_j} , v_{\eps_j}) \in H^1(\Om) \times \BV(\Om;[0,1])$ be a minimizer of $G_{\eps_j}$ for every $j\in\N$. Then, $v_{\eps_j} \to 1$ in~$L^1(\Om)$, and there exists $u\in \SBV^2(\Om) \cap L^\infty(\Om)$, such that, up to a subsequence, $u_{\eps_j} \to u$ in $L^1 (\Om)$, and $(u,1)$ minimizes $G$.
\end{theorem}
\begin{proof}
	In order to show the existence of minimizers of $G_\eps$ we fix $\eps>0$ and take a minimizing sequence $(u_j,v_j)$ of $G_\eps$, i.e.
	\begin{equation*}
	\lim_{j\to\infty} G_\eps (u_j , v_j) = \inf_{L^1(\Om) \times L^1(\Om)} G_\eps =  \inf_{H^1(\Om) \times \BV (\Om; [0,1])} G_\eps \,.
	\end{equation*}
	In view of $\psi_\eps(t) \geq ct + d$ for all $t \geq 0$ and some $c >0$ as stated in Assumption~\ref{assumptionPsi}, it is easy to see that $(\abs{\D v_j} (\Om))$ is bounded. Further, $(u_j)$ is bounded in $H^1(\Om)$, since $\eta_\eps >0$. By the compactness properties of $\BV(\Om)$ (see \cite[Theorem~3.23]{AmbFusPal2000})  and $H^1 (\Om)$ there exist subsequences of $(u_j)$ and $(v_j)$ (not relabeled) and functions $v\in \BV(\Om)$ and $u\in H^1(\Om)$, such that $v_j \to v$ in $L^1(\Om)$, $\D v_j$ converges sequentially weakly* (in the space of Radon measures) to $\D v$ and $u_j \wto u$ weakly in $H^1 (\Om)$.

	From Fatou's Lemma and \cite[Theorems~5.4 and 5.8]{AmbFusPal2000} we get the lower semi-continuity of $G_\eps$ so that
	\begin{equation*}
	G_\eps (u, v) \leq \liminf_{j\to \infty} G_\eps (u_j ,v_j) \leq \inf_{H^1(\Om) \times \BV(\Om;[0,1])} G_\eps \,.
	\end{equation*}
	Hence, the pair $(u,v)$ minimizes $G_\eps$.

	Now let $(\eps_j)$ be a sequence converging to $0$, and let the pair $(u_{\eps_j}, v_{\eps_j}) $ be a minimizer of $G_{\eps_j}$ for every $j \in \N$. Then we simply have
	\begin{equation*}
		 \int_\Om \varphi_{\eps_j} \bigl( W_{\eps_j} (v_{\varepsilon_j}) \bigr) \,\dd x \leq \min_{L^1(\Om) \times L^1(\Om)} G_{\eps_j} \leq G_{\eps_j} (0,0) = \beta\int_\Om \abs{g}^2 \,\dd x
	\end{equation*}
	which implies, together with Assumption \ref{assumptionPhi}, $v_{\eps_j} \to 1$ in $L^1 (\Om)$ as $\eps_j \to 0$.

	From a simple cut-off argument we get that $\norm{u_{\eps_j}}_{L^\infty (\Om)} \leq \norm{g}_{L^\infty (\Om)}$. Since $f$ is Lipschitz continuous according to Assumption~\ref{assumptionf}, we have $f(v_{\eps_j}) \in \BV(\Omega)$
        with $\D f(v_{\eps_j})$ obeying the chain rule for $\BV$-functions (see \cite[Theorem~3.99]{AmbFusPal2000}). Further, the multiplication operation $(s,t) \mapsto st$ is continuously differentiable and Lipschitz continuous on bounded sets, thus, since both $u_{\eps_j}$ and $f(v_{\eps_j})$ are a.e.~bounded, the product rule for $\BV$-functions holds (see \cite[Theorem~3.99]{AmbFusPal2000}), giving
        $w_{\eps_j}  \coloneq  u_{\eps_j} f(v_{\eps_j}) \in \BV(\Om)$.
        We moreover have
	\begin{multline*}
	\abs{ \D w_{\eps_j} } (\Om) \leq \int_\Om  \abs{\nabla u_{\eps_j} } f(v_{\eps_j}) \,\dd x + \int_\Om  \abs{u_{\eps_j}} f' (v_{\eps_j}) \abs{\nabla v_{\eps_j} } \,\dd x \\
	+ \int_{S_{v_{\eps_j}}} \abs{u_{\eps_j}} \bigl ( f(v_{\eps_j}^+ ) - f(v_{\eps_j}^-) \bigr) \,\dd \HH^{n-1}
        + \int_\Om \abs{u_{\eps_j}} f'(\tilde v_{\eps_j}) \,\dd \abs{\D^c v_{\eps_j}}
        \,.
	\end{multline*}
	Since $u_{\eps_j}$, $f$ and $f'$ are bounded, we can estimate,
        employing a weighted version of the Cauchy--Schwarz inequality and Young's inequality,
	\begin{multline*}
		\abs{ \D w_{\eps_j} } (\Om) \leq C \biggl( 1 + \int_{\Om} f(v_{\eps_j} ) \abs{\nabla u_{\eps_j}}^2 \, \dd x \biggr) \\+ \norm{g}_{L^\infty(\Om)} \norm{ f'}_{L^\infty([0,1])} \biggl( \int_\Om  \abs{\nabla v_{\eps_j}} \,\dd x
		+ \abs{\D^j v_{\eps_j}}(\Om) + \abs{\D^c v_{\eps_j}}(\Om)  \biggr)
	\end{multline*}
	where $C>0$ depends on $f$ and $\Om$.

	By Assumption \ref{assumptionPsi}, $t \leq c^{-1}(\psi_{\varepsilon_j} (t) - d)$ for all $t \geq 0$ and some $c > 0$, $d\in \R$, so
  	\begin{equation*}
		\int_\Om \abs{ \nabla v_{\eps_j}} \,\dd x \leq C \biggl(1+  \int_\Om \psi_{\eps_j} \bigl( \abs{\nabla v_{\eps_j}} \bigr) \,\dd x \biggr)
	\end{equation*}
	with $C>0$ suitably chosen.
        Altogether, since $(c_\varepsilon)$ is bounded, we obtain
	\begin{equation*}
		\abs{ \D w_{\eps_j} } (\Om) \leq C  \bigl(1 + G_{\eps_j}(u_{\eps_j} ,v_{\eps_j}) \bigr) \leq C \biggl( 1 + \beta \int_\Om \abs{g}^2 \,\dd x \biggr) \,,
	\end{equation*}
	where here $C>0$ is a constant depending on $\Om$, $g$, $f$ and $c_0$.  Hence, $\abs{ \D w_{\eps_j} } (\Om)$ is bounded.

	Clearly, $w_{\eps_j}$ is pointwise a.e.~bounded independent of $j$, so by the compactness properties of $\BV(\Om)$ (see \cite[Theorem~3.23]{AmbFusPal2000}) there exists a subsequence of $\eps_j$ (not relabeled) converging to 0, such that $w_{\eps_j}$ converges to some $w$ in $L^1(\Om)$. Since $v_{\eps_j} \to 1$ a.e.~and $f$ is continuous, we also have that $u_{\eps_j} = w_{\eps_j} / f(v_{\eps_j}) \to w / f(1)$ a.e.~as $\eps_j \to 0$. Since $\norm{u_{\eps_j}}_{L^\infty (\Om)}\leq \norm{g}_{L^\infty (\Om)}$, the Dominated Convergence Theorem yields $u_{\eps_j} \to w/f(1)$ in $L^1(\Om)$.

	The assertion now follows from Proposition~\ref{prop:minconvergence} and Corollary~\ref{corollary2}.
\end{proof}

\begin{remark}
	\label{rmrk:choice-of-eta}
	Note that  Theorem~\ref{mainTheorem} and Corollary~\ref{corollary2}  also holds for $\eta_\eps = 0$.  However, for the existence of minimizers of $G_\eps$ we require $\eta_\eps >0$ as indicated in the proof of Theorem~\ref{thm:convergenceOfMinimizers}.
\end{remark}

The following corollary represents a special case of the previous results, which represents the version that is relevant for our numerical computation in Section~\ref{numerics}.
\begin{corollary}
  \label{corollary}
  Let $\Om \subset \R^n$ be a non-empty, open, bounded set with Lipschitz boundary and let $\alpha,\beta, \gamma >0$. For each $\eps > 0$ let $\eta_\eps > 0$ such that $\frac{\eta_\eps}{\eps} \to 0$ as $\eps \to 0$ and define the functionals $G_\eps \colon L^1(\Om) \times L^1 (\Om) \to \R$ by
  \begin{multline*}
    G_\eps (u,v) \coloneq \frac{\alpha}{2} \int_\Om (v^2 + \eta_\eps)
    \abs{\nabla u}^2 \, \dd x + \frac{\beta}{2} \int_\Om \abs{g-u}^2 \,\dd x  \\
    + \frac{\gamma}{2 \eps} \int_\Om (1-v) \,\dd x +
    \frac{\gamma}{2} \abs{\D v}(\Om)
  \end{multline*}
  if $u\in H^1(\Om), v \in \BV(\Om; [0,1])$ and
  $G_\eps (u,v) \coloneq + \infty$ otherwise. Moreover, define
  $G \colon L^1(\Om) \times L^1 (\Om) \to \R$ by
  \begin{equation*}
    G (u,v) \coloneq \frac{\alpha}{2} \int_\Om \abs{\nabla u}^2 \,\dd x + \frac{\beta}{2} \int_\Om \abs{g-u}^2 \,\dd x  + \gamma \HH^{n-1} (S_u)
   \end{equation*}
    for $u\in \SBV^2(\Om) \cap L^\infty (\Om)$, $v=1$  a.e., and $G(u,v) = + \infty$ otherwise.

  Then, for every infinitesimal sequence $(\eps_j)$ a minimizer $(u_{\eps_j} , v_{\eps_j}) $ of $G_{\eps_j}$ exists for every $j\in \N$. Furthermore, $v_{\eps_j} \to 1$ in $L^1(\Om)$, and up to a subsequence $u_{\eps_j} \to u $ in $L^1(\Om)$ with $(u,1)$ being a minimizer of $G$.
\end{corollary}
\begin{proof}[Proof of Corollary~\ref{corollary}]
	We define $\tilde G_\eps \coloneq \frac{2}{\gamma} G_\eps$ and, choose the functions $f$, $W_\eps$, $\varphi_\eps$ and $\psi_\eps$ in the following way:
	\begin{equation*}
	f(t) =  \frac{\alpha}{\gamma} t^2, \qquad W_\eps (t) = (1-t)^\eps, \qquad \varphi_\eps (t) = \frac{1}{\eps} t^\frac{1}{\eps}, \qquad \psi_\eps(s) = s
	\end{equation*}
	for all $t \in [0,1], s\in [0,\infty)$ and $0 < \eps < 1$. Note that in this setting we have
	\begin{equation*}
	\varphi^\ast_\eps (s) =
	\left\{
	\begin{aligned}
	&(1-\eps) (\eps^{2\eps} s)^\frac{1}{1-\eps} && \text{for } s \in [0,\eps^{-2}] \,, \\
	& s - \frac{1}{\eps} && \text{for } s >\eps^{-2} \,,
	\end{aligned}
	\right.
	\end{equation*}
	and hence, one can simply verify that Assumption~\ref{mainAssumptions} is fulfilled with $c_0 = 1$.

	From Theorem~\ref{mainTheorem} we, therefore, get that $\tilde G_\eps$ $\Gamma$-converges to $\tilde G \coloneq \frac{2}{\gamma} G$.
	Since $\Gamma$\nobreakdash{-}convergence is preserved under constant
	multiplication we get the result by multiplying $\tilde G_\eps$ and $\tilde G$
	with $\frac{\gamma}{2}$.
\end{proof}

\section{Proof of Theorem~\ref{mainTheorem}}
\label{proof}
The proof of Theorem~\ref{mainTheorem} follows the usual strategy that has been used for the classical Ambrosio-Tortorelli approximation and various generalizations (see \cite{AmbTor1990,AmbTor1992,Bra1998,DalIur2013,Iur2013,Iur2014}).  Firstly, we show the $\liminf$\nobreakdash-inequality on the real line (see Proposition~\ref{oneDimLiminf}). The generalization to the multi-dimensional case, stated in Proposition~\ref{multDimLiminf}, is then shown by a slicing argument.

The $\limsup$\nobreakdash-inequality is shown with the help of the density result,  Theorem~\ref{densityResult}. Here, we exploit the fact that the phase field variable is allowed to have jumps, which enables the construction of a much simpler recovery sequence than when the phase field needs to be smooth.

\begin{proposition} \label{oneDimLiminf}
  In the setting of Theorem~\ref{mainTheorem} with $\Omega \subset \R$ we redefine $F \colon L^1(\Om) \times L^1 (\Om) \to \R$ by
  \begin{equation*}
    F (u,v) \coloneq
    \left\{
      \begin{aligned}
        & \int_\Om f(1)\abs{u'}^2 \,\dd x + 2 c_0 \# S_u  &  & \text{for } u\in \SBV^2(\Om), v=1 \text{ a.e.}                                  \\
        & {+ \infty}                                      &  & \text{otherwise}
      \end{aligned}
    \right.
  \end{equation*}
  Then there holds $F \leq \Gliminf_{\eps \to 0} F_\eps$.
\end{proposition}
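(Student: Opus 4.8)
The plan is to follow the classical Ambrosio--Tortorelli scheme, but to exploit the convex-duality structure of \ref{assumptionPhi}--\ref{assumptionPsi} in order to reduce the three $v$-dependent terms of $F_\eps$ to a single total-variation term. Throughout I fix $(u,v)$ and a sequence $(u_\eps,v_\eps)\to(u,v)$ in $L^1(\Om)\times L^1(\Om)$; I may assume $\liminf_\eps F_\eps(u_\eps,v_\eps)=:L<\infty$ and, passing to a subsequence, that the $\liminf$ is a limit, so that $u_\eps\in H^1(\Om)$, $v_\eps\in\BV(\Om;[0,1])$ and $F_\eps(u_\eps,v_\eps)\le L+1$ for small $\eps$. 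The first, routine step is $v=1$ a.e.: for each $0<T<1$, assumption \ref{assumptionPhi} makes $\varphi_\eps(W_\eps(\cdot))$ exceed an arbitrarily large constant on $[0,T]$ for small $\eps$, so boundedness of $\int_\Om\varphi_\eps(W_\eps(v_\eps))\,\dd x$ forces $\LL^1(\{v_\eps\le T\})\to0$; hence $v_\eps\to1$ in measure, and since $v_\eps\to v$ in $L^1$ we get $v=1$ a.e.

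The second step is the surface lower bound. Set $\Phi_\eps(t):=\int_0^t W_\eps(s)\,\dd s$; since $W_\eps\to W$ in $L^1([0,1])$ we have $\Phi_\eps\to\Phi$ uniformly on $[0,1]$, where $\Phi(t):=\int_0^t W$, $\Phi(0)=0$, $\Phi(1)=c_W$. Using \ref{assumptionPsi} ($\psi_\eps\ge\varphi_\eps^\ast$) together with Fenchel's inequality \eqref{fenchelsInequality} applied to $t=W_\eps(v_\eps)$, $s=\abs{\nabla v_\eps}$, the integrand satisfies $\varphi_\eps(W_\eps(v_\eps))+\psi_\eps(\abs{\nabla v_\eps})\ge W_\eps(v_\eps)\abs{\nabla v_\eps}=\abs{\nabla(\Phi_\eps\circ v_\eps)}$, which is exactly the density of the absolutely continuous part of $\D(\Phi_\eps\circ v_\eps)$ by the chain rule \eqref{chainRule}. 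Since $0\le W_\eps\le c_W$ by \ref{assumptionW}, the same chain rule bounds the singular parts by $\abs{\D^j(\Phi_\eps\circ v_\eps)}(\Om)\le c_W\abs{\D^j v_\eps}(\Om)$ and $\abs{\D^c(\Phi_\eps\circ v_\eps)}(\Om)\le c_W\abs{\D^c v_\eps}(\Om)$. Adding the three pieces shows that the whole $v$-part of $F_\eps$ dominates $\abs{\D(\Phi_\eps\circ v_\eps)}(\Om)$, while the bulk part trivially obeys $\int_\Om(f(v_\eps)+\eta_\eps)\abs{u_\eps'}^2\ge\int_\Om f(v_\eps)\abs{u_\eps'}^2$.

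The heart of the proof, and the step I expect to be the main obstacle, is the coupling converting these separate bounds into $\int_\Om f(1)\abs{u'}^2+2c_W\#S_u$; note that $\zeta_\eps:=\Phi_\eps\circ v_\eps\to c_W$ in $L^1$, so the jump information is invisible in the limit of $\zeta_\eps$ alone and must be recovered through $u_\eps$. The mechanism is that at each jump point $x_0$ of $u$, of height $j>0$, the transition $\int_{x_0-r}^{x_0+r}\abs{u_\eps'}\ge j-o(1)$ combined with Cauchy--Schwarz gives $\int_{x_0-r}^{x_0+r}f(v_\eps)\abs{u_\eps'}^2\ge f(\theta)\,\tfrac{(j-o(1))^2}{2r}$ \emph{whenever} $v_\eps\ge\theta$ on $(x_0-r,x_0+r)$; choosing $r$ small enough that this exceeds $L+1$ forces $v_\eps$, hence $\zeta_\eps$, to dip below $\Phi_\eps(\theta)\approx0$ somewhere in the interval. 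Since $\zeta_\eps\to c_W$ in $L^1$, on each side of the dip the representative $\tilde\zeta_\eps$ also attains values close to $c_W$; evaluating the pointwise-variation formula \eqref{pointwiseVariation} at a left point, the dip point and a right point yields $\abs{\D\zeta_\eps}\big((x_0-r,x_0+r)\big)\ge2\big(\Phi_\eps(1)-\Phi_\eps(\theta)\big)-o(1)$. Isolating any finite subfamily of $k$ jump points in pairwise disjoint intervals and summing (the measure $\abs{\D\zeta_\eps}$ is superadditive over disjoint sets) gives surface part $\ge2k\big(\Phi_\eps(1)-\Phi_\eps(\theta)\big)-o(1)$; letting first $\eps\to0$ and then $\theta\to0$ yields $\ge2c_W\,k$, which forces $\#S_u<\infty$ (else $L=\infty$) and, taking $k=\#S_u$, produces the sharp surface term $2c_W\#S_u$.

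It then remains to recover the bulk term and to certify $u\in\SBV^2(\Om)$. Away from the finitely many jump points $\Om\setminus S_u$ is open of full measure; on any compact $K\subset\Om\setminus S_u$ the bound $\int_{K\cap\{v_\eps\ge1-\delta\}}\abs{u_\eps'}^2\le(L+1)/f(1-\delta)$ holds by $f$ non-decreasing (\ref{assumptionf}), and combined with $v_\eps\to1$ in measure and the convexity of $\xi\mapsto\abs\xi^2$, a standard lower-semicontinuity argument for integral functionals with coefficients converging in measure gives both $u\in H^1(K)$ (so $u\in\SBV^2(\Om)$ once $u'\in L^2$ and $\#S_u<\infty$ are known) and $\liminf_\eps\int_K f(v_\eps)\abs{u_\eps'}^2\ge f(1)\int_K\abs{u'}^2$; exhausting $\Om\setminus S_u$ by such $K$ gives the full bulk bound. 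As the bulk and surface terms are distinct summands of $F_\eps$, no energy is double-counted, and adding the two lower bounds completes $F(u,1)\le\Gliminf_\eps F_\eps$. Two technical points I would treat with care are the selection of the endpoints $x_0\pm r$ as approximate-continuity points of $u$, so that $\int\abs{u_\eps'}\ge j-o(1)$ is legitimate, and, for unbounded $u$, a preliminary truncation $u^M$ (which only decreases the bulk energy) followed by $M\to\infty$.
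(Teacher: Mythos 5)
Your first three steps are sound and essentially parallel the paper: the identification $v=1$ a.e.\ from \ref{assumptionPhi}, the Fenchel/chain-rule reduction of the three $v$-terms of $F_\eps$ to $\abs{\D(\Phi_\eps\circ v_\eps)}(\Om)$, and the forcing of dips of $v_\eps$ at jump points of $u$ (you do this by a quantitative Cauchy--Schwarz scaling, where the paper uses a soft weak-$L^2$-compactness contradiction; both work, modulo the fixable detail that the dip must be forced in the \emph{inner half} of the interval so that points with $\tilde\zeta_\eps\approx c_W$ exist on both sides of it). The genuine gap is in your last step, the bulk bound together with $u\in H^1(K)$. The ``standard lower-semicontinuity argument for integral functionals with coefficients converging in measure'' that you invoke does not exist under the hypotheses you list. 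What you actually have is an $L^2$ bound on $u_\eps'$ only on the $\eps$-dependent set $E_\eps=K\cap\{v_\eps\ge 1-\delta\}$ with $\abs{K\setminus E_\eps}\to 0$; this yields no weak compactness of $(u_\eps')$ on $K$ and no control of the limit. Concretely, let $u_n$ be the standard stage-$n$ interpolation of the Cantor function (affine on the $2^n$ construction intervals of length $3^{-n}$, constant on the gaps) and $v_n=0$ on the construction intervals, $v_n=1$ on the gaps. Then $u_n\to u$ (the Cantor function) in $L^1$, $v_n\to 1$ in measure since $2^n3^{-n}\to0$, and $\int_{\{v_n\ge 1-\delta\}}\abs{u_n'}^2\,\dd x=0$, yet $u\notin H^1$ while $S_u=\emptyset$. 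So your hypotheses (i) bound on $E_\eps$, (ii) convergence in measure, (iii) convexity do \emph{not} imply your conclusion; what rules out this scenario is the energy cost of each dip of $v_\eps$, which your bulk step never uses. (In this example the actual energy satisfies $F_{\eps_n}(u_n,v_n)\ge c_W\abs{\D^j v_n}(\Om)=2c_W 2^n\to\infty$, so it contradicts only your shortcut, not the proposition.)

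The missing ingredient is precisely where the paper spends its effort in the second half of the proof: partition an interval $I\subset\Om\setminus S_u$ into $k$ subintervals $I^k_\ell$ and count those with $\lim_j\essinf_{I^k_\ell}v_j\le z$; on each such subinterval, since $v_j\to 1$ a.e.\ there are nearby points with $\tilde v_j\approx 1$, so the variation mechanism of your step 2 forces a cost of at least $\int_z^1 W(s)\,\dd s>0$ (here $1\in\esssupp W$ from \ref{assumptionW} is essential). Hence the number of such subintervals is bounded uniformly in $k$ and $j$, the bad regions concentrate at a finite set $T_z$, and away from $B_\delta(T_z)$ one has $v_j\ge z$ eventually on a \emph{fixed} ($\eps$-independent) open set. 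Only then does one obtain a genuine $L^2$ bound for $u_j'$ there, hence $u_j'\wto u'$, $u\in H^1(I\setminus T_z)$ -- which upgrades to $u\in H^1(I)$ because $I\cap S_u=\emptyset$ -- and lower semicontinuity of $f(z)\int\abs{\cdot}^2\,\dd x$, followed by $\delta\to0$ and $z\to 1$ using the continuity of $f$. In short, the counting mechanism you deploy at jump points must be run a second time, away from $S_u$, to legitimize the bulk step; the soft argument you substituted for it is false as stated.
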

\begin{proof}
  First of all, for each open set $I\subset \Om$ we define the localized functionals
  \begin{multline*}
    F_\eps (u,v;I) \coloneq \int_I \bigl( f(v) + \eta_\eps \bigr) \abs{u'}^2 + \varphi_\eps \bigl( W_\eps (v) \bigr) +  \psi_\eps \bigl( \abs{v'} \bigr) \,\dd x \\
    + c_\eps \bigl( \abs{\D^j v}(I) + \abs{\D^c v}(I) \bigr)
  \end{multline*}
  for all $u \in H^1(I)$ and $v\in \BV(I;[0,1])$, and $F_\eps(u,v;I) \coloneq +\infty$ otherwise.

  Now, let $(\eps_j)$ be a sequence greater than zero with $\eps_j \to 0$ as $j \to \infty$,  and let $(u_j)$ and $(v_j)$ be sequences in $L^1(\Om)$ such that $u_j \to u$ and $v_j \to v$ as $j\to \infty$. By possibly extracting a subsequence, we can assume that
  \begin{equation*}
    \liminf_{j\to\infty} F_{\eps_j} (u_j , v_j) = \lim_{j \to \infty} F_{\eps_j} (u_j,v_j) < \infty \,.
  \end{equation*}
  Therefore, we must have $\int_\Om \varphi_{\eps_j} (W_{\eps_j} (v_j)) \,\dd x < \infty$, and because of to the uniform convergence of $\varphi_{\eps_j} (W_{\eps_j} (\cdot))$ to $ + \infty$ as $\eps\to 0$ (see \ref{assumptionPhi}), we obtain that $v=1$ a.e. on $\Om$.

  We first show that $\# S_u$ is finite and
  \begin{equation} \label{firstStep}
    2 c_0 \# S_u \leq \liminf_{j\to\infty} F_{\eps_j} \bigl( u_j , v_j; B_\delta (S_u) \bigr) \quad \tforall \delta > 0 \text{ sufficiently small}\,.
  \end{equation}
  For that let $y_0 \in S_u$, and let $\delta > 0$ sufficiently small such that $B_\delta (y_0) \subset \Omega$. Set $M \coloneq \liminf_{j\to\infty} \essinf_{B_{\frac{\delta}{2}} (y_0)} (f \circ v_j)$ and assume that $M>0$. Furthermore, let $0 < \kappa < M$ and choose $j_0 > 0$ such that up to a subsequence, there holds $M <  \essinf_{B_{\frac{\delta}{2}} (y_0)} (f\circ v_j) + \kappa$ for all $j > j_0$. Then there holds
  \[
    \int_{y_0 - \frac{\delta}{2}}^{y_0 + \frac{\delta}{2}} \abs{u'_j}^2 \,\dd x \leq \frac{1}{M - \kappa} \int_{y_0 - \frac{\delta}{2}}^{y_0 + \frac{\delta}{2}} f (v_j) \abs{u'_j}^2 \,\dd x \leq \frac{C}{M-\kappa} \quad \tforall j > j_0
  \]
  so that $u_j'$ converges weakly to $u'$ in $L^2 (B_\frac{\delta}{2} (y_0))$ and consequently $y_0 \notin S_u$. Hence, we must have $M=0$, and  we can find a sequence $(y_j)$ such that $f(\tilde v_j (y_j)) \to 0$, where $\tilde v_j$ is a precise representative of $v_j$. The assumptions on $f$ in  \ref{assumptionf} imply $\tilde v_j (y_j) \to 0$ as $j \to \infty$. Since $\tilde v_j \to 1$ a.e.~we can, therefore, find $y^+, y^- \in B_\delta (y_0)$ with $y^- < y_0 < y^+$ such that $\tilde v_j(y^-) \to 1$ as well as $\tilde v_j (y^+) \to 1$.

  With this at hand we get from the $L^1$\nobreakdash-convergence of $W_\eps$ (see \ref{assumptionW}),
  \begin{equation}
    \label{basic}
    2 c_0 = \lim_{j\to\infty}  \biggl[ \int_{\tilde v_j(y_j)}^{\tilde v_j(y^+)} W_{\eps_j} (s) \,\dd s + \int_{\tilde v_j(y_j)}^{\tilde v_j(y^-)} W_{\eps_j} (s) \,\dd s \biggr] \,.
  \end{equation}
  Defining
  \begin{equation}\label{defphi}
    \Phi_{\eps} (t) \coloneq \int_0^t W_\eps (s) \,\dd s \quad \tforall t\in [0,1], \eps > 0
  \end{equation}
  we get, for $j$ large enough,
  \begin{multline*}
    \int_{\tilde v_j(y_j)}^{\tilde v_j(y^+)} W_{\eps_j} (s) \,\dd s + \int_{\tilde v_j(y_j)}^{\tilde v_j(y^-)} W_{\eps_j} (s) \,\dd s \\
    = \bigabs{\Phi_{\eps_j} \bigl(\tilde v_j(y^+) \bigr) - \Phi_{\eps_j} \bigl( \tilde v_j (y_j) \bigr)} + \bigabs{ \Phi_{\eps_j} \bigl( \tilde v_j(y^-) \bigr) - \Phi_{\eps_j} \bigl( \tilde v_j (y_j) \bigr)}
  \end{multline*}
  and together with \eqref{pointwiseVariation}
  \begin{equation}
    \label{applyPointwiseVariation}
    \int_{\tilde v_j(y_j)}^{\tilde v_j(y^+)} W_{\eps_j} (s) \,\dd s + \int_{\tilde v_j(y_j)}^{\tilde v_j(y^-)} W_{\eps_j} (s) \,\dd s \leq \bigabs{ \D (\Phi_{\eps_j} \circ v_j)} \bigl(B_\delta (y_0) \bigr) \,.
  \end{equation}
  Applying Lemma~\ref{lem:tv_compose_est} yields
  \begin{multline}
    \label{applyChainRule}
      \bigabs{ \D (\Phi_{\eps_j} \circ v_j)} \bigl(B_\delta (y_0) \bigr)
      \leq \int_{y_0 - \delta}^{y_0 + \delta}  \varphi_\varepsilon \bigl( W_\varepsilon (v) \bigr) \,\dd{x}   + \int_{y_0 - \delta}^{y_0 + \delta}  \psi_\varepsilon(\abs{v'}) \,\dd{x} \\
      + c_\varepsilon \Bigl( \abs{\D^j v} \bigl(B_\delta (y_0) \bigr) + \abs{\D^c v} \bigl(B_\delta (y_0) \bigr) \Bigr)
  \end{multline}
  By merging \eqref{basic}, \eqref{applyPointwiseVariation} and \eqref{applyChainRule} and since $c_\eps \to c_0$ as $\eps \to 0$ (see \ref{assumptionPsi}) we deduce
  \begin{equation*}
    2c_0 \leq \liminf_{j \to \infty} F_{\eps_j}\bigl( u_j,v_j; B_\delta(y_0) \bigr) \,.
  \end{equation*}

  For every $N\leq \#S_u$ we can repeat the preceding arguments for each element in a set $\{y_1, \dotsc, y_N \} \subset S_u$ with $\delta > 0$ sufficiently small such that $B_\delta(y_k) \cap B_\delta(y_\ell) = \emptyset$ for $k \neq \ell$ in order to obtain
  \begin{equation*}
    2 c_0 N \leq \sum_{k=1}^N \liminf_{j \to \infty} F_{\eps_j} \bigl(u_j,v_j; B_\delta(y_k)\bigr) \leq \liminf_{j \to \infty} F_{\eps_j} \biggl(u_j, v_j; \bigcup_{k=1}^N B_\delta(y_k) \biggr)\,.
  \end{equation*}
  By assumption the right hand side is finite; hence, there must hold $\# S_u < \infty$ and we deduce \eqref{firstStep}.

  In the next step we show that for all $\delta > 0$,
  \begin{equation} \label{secondStep}
    \int_{\Om \setminus B_\delta (S_u) }f(1) \abs{u'}^2 \,\dd x \leq \liminf_{j \to \infty} F_{\eps_j} \bigl(u_j, v_j; \Om \setminus \overline{B_\delta(S_u)}\bigr) \,.
  \end{equation}

  Let $I \coloneq (a,b) \subset \Om$ be an open interval such that $I \cap S_u =\emptyset$.
  For $k\in \N$ and $\ell \in \{1, \dotsc , k\}$ we define the intervals
  \[
    I^k_\ell \coloneq \biggl( a + \frac{\ell-1}{k}(b-a) , a + \frac{\ell}{k} (b-a)\biggr) \,,
  \]
  and we extract a subsequence of $(v_j)$ (not relabeled) such that $\lim_{j \to \infty} \essinf_{I^k_\ell} v_j$ exists for all $\ell$. Moreover, for $0<z<1$ we define the set
  \[
   T^k_z \coloneq \{ \ell \in \{1, \dotsc,k \} \colon \lim_{j\to\infty} \essinf_{I^k_\ell} v_j \leq z \} \,.
  \]
  For every $\ell\in T^k_z$ there exists a sequence $(x_j)$ in $I^k_\ell$ and $y \in I^k_\ell$ such that
  \[
    \lim_{j \to \infty} \tilde v_j (x_j) = \lim_{j\to\infty} \essinf_{I^k_\ell} v_j\qquad  \text{and} \qquad  \tilde v_j(y) \to 1 \,.
  \]
  Thus, analogously to the above it follows that
  \begin{equation*}
    \int_z^1 W (s) \, \dd s\leq \lim_{j \to \infty} \int_{\tilde v_j(x_j)}^{\tilde v_j(y)} W_{\eps_j} (s)\,\dd s \leq \liminf_{j\to\infty} F_{\eps_j} (u_j, v_j ;  I^k_\ell ) \leq C
  \end{equation*}
  for some $C > 0$ by assumption.

  Repeating this argument for every $\ell\in T^k_z$ we get
  \begin{equation*}
    \bigl( \#  T^k_z \bigr) \int_z^1 W(s) \,\dd s \leq \liminf_{j\to\infty} F_{\eps_j} (u_j, v_j ;  I ) \leq C  \,.
  \end{equation*}
  Note that in view of \ref{assumptionW} there holds $\int_z^1 W(s)\,\dd s > 0$, and hence, $\# T_z^k$ is bounded independently of $k$.
  Because $\# T_z^k$ is also non-decreasing with respect to $k$ it remains constant for $k$ large enough. As a consequence, we can pick $\ell^k_1 < \ell_2^k < \dotsb < \ell^k_{N_z} \in T^k_z$ with $N_z \coloneq \max_{k\in \N} \bigl( \#  T^k_z \bigr)$, such that each~$\ell^k_i/k$ converges to some $\theta_i \in [0,1]$ as $k\to \infty$. Define $T_z \coloneq \{y_1,\dotsc, y_{N}\}$ with $y_i \coloneq a+\theta_i (b-a)$. Let $\rho >0$, choose $k > 2(b - a)/\rho$ large enough, and let $\ell \in T_z^k$. Then we have $I^k_{\ell} \subset B_\rho (T_z)$. Therefore,
  \begin{equation}
  \label{eq:10}
  \begin{aligned}
    \liminf_{j\to\infty} f(z) \int_{I \setminus B_\rho (T_z) } \abs{u_j'}^2 \,\dd x
    &\leq \liminf_{j\to\infty} \int_{ I } f (v_j) \abs{u_j'}^2 \,\dd x \\
    &\leq \liminf_{j \to \infty} F_{\eps_j} (u_j, v_j;I) \,.
    \end{aligned}
  \end{equation}
  From \ref{assumptionf} we have $f(z) > 0$, and thus, we obtain  $u_j' \wto u'$ in $L^2(I \setminus B_\rho(T_z))$ up to a subsequence, and consequently $u \in H^1(I \setminus B_\rho(T_z))$. By the  weak lower semi-continuity of the norm we have
  \begin{equation*}
	  f(z) \int_{I \setminus B_\rho (T_z) } \abs{u'}^2 \,\dd x
	  \leq \liminf_{j \to \infty} F_{\eps_j} (u_j, v_j;I) \,.
  \end{equation*}
  Since this inequality holds for all $\rho >0$, we have $u\in H^1(I\setminus T_z)$, and since $u\in \SBV(I) $ with $I \cap S_u = \emptyset$, we deduce that $u\in H^1 (I)$.  Taking the limit for $\rho \to 0$  results in
    \begin{equation*}
  f(z) \int_{I } \abs{u'}^2 \,\dd x
  \leq \liminf_{j \to \infty} F_{\eps_j} (u_j, v_j;I) \,.
  \end{equation*}
  Finally, we take the limit for $z \to 1$ and obtain (note that $f$ is continuous from~\ref{assumptionf})
\begin{equation*}
\int_{I} f(1) \abs{u'}^2 \,\dd x \leq \liminf_{j \to \infty} F_{\eps_j} (u_j, v_j;I) \,.
\end{equation*}
  Since $I \subset \Om$ was chosen arbitrarily such that $I \cap S_u = \emptyset$  we end up with \eqref{secondStep}. Together with \eqref{firstStep} we  obtain
  \begin{equation*}
  	\int_{\Om \setminus B_\delta (S_u) }f(1) \abs{u'}^2  \,\dd x + 2c_0 \#S_u \leq \liminf_{j\to\infty} F_{\eps_j} (u_j,v_j) \,.
  \end{equation*}
  and we conclude the proof by taking the limit for $\delta\to 0$ .
\end{proof}

\begin{proposition} \label{multDimLiminf}
  In the setting of Theorem~\ref{mainTheorem} there holds
  \[
    F(u,v) \leq \Gliminf_{\eps \to 0} F_\eps (u,v) \quad\tforall u,v \in L^1(\Omega)\,.
  \]
\end{proposition}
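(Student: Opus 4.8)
The plan is to reduce to the one-dimensional statement already proved in Proposition~\ref{oneDimLiminf} by slicing, and then to reassemble the resulting anisotropic bounds into the isotropic target energy by a measure-theoretic localization argument. Fix a sequence $\eps_j \to 0$ and $L^1$-converging sequences $u_j \to u$, $v_j \to v$; as usual we may assume $\liminf_j F_{\eps_j}(u_j,v_j) < \infty$, and, arguing exactly as at the beginning of the proof of Proposition~\ref{oneDimLiminf}, the uniform divergence of $\varphi_\eps(W_\eps(\cdot))$ on $[0,T]$ from \ref{assumptionPhi} forces $v = 1$ a.e. For open $A \subseteq \Om$ I introduce the localized functionals $F_\eps(u,v;A)$ as in the one-dimensional proof and the increasing, superadditive set function $\alpha(A) \coloneq \liminf_j F_{\eps_j}(u_j,v_j;A)$.

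The first step is a one-directional slicing estimate. Fixing $\xi \in \bS^{n-1}$, I would verify term by term, using Theorem~\ref{slicingProp}, that for every admissible pair the integrated one-dimensional energy of the slices is dominated by the full energy,
\[
\int_{A_\xi} F_{\eps}\bigl(u_y^\xi, v_y^\xi; A_y^\xi\bigr)\,\dd\HH^{n-1}(y) \le F_{\eps}(u,v;A),
\]
where the left integrand denotes the one-dimensional localized functional. Indeed, the bulk term passes through Fubini with $|\scprod{\nabla u}{\xi}|^2 \le |\nabla u|^2$; the potential $\varphi_\eps(W_\eps(v))$ slices exactly; the term $\psi_\eps(|(v_y^\xi)'|) = \psi_\eps(|\scprod{\nabla v}{\xi}|) \le \psi_\eps(|\nabla v|)$ by monotonicity of $\psi_\eps$ (\ref{assumptionPsi}); and the jump and Cantor terms are controlled by item (4) of Theorem~\ref{slicingProp} together with $|\scprod{\D^\ast v}{\xi}| \le |\D^\ast v|$ for $\ast = j,c$.

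Next I would pass to the limit on slices. For a fixed open $A$, choose a subsequence realizing $\alpha(A) = \lim_k F_{\eps_{j_k}}(u_{j_k},v_{j_k};A)$ and, by Fubini, extract further so that $(u_{j_k})_y^\xi \to u_y^\xi$ in $L^1(A_y^\xi)$ and $(v_{j_k})_y^\xi \to 1$ for $\HH^{n-1}$-a.e.\ $y$. Applying Proposition~\ref{oneDimLiminf} slicewise, integrating over $y$, and invoking Fatou's lemma together with the estimate above yields
\[
\int_{A_\xi}\Bigl( \int_{A_y^\xi} f(1)\,|(u_y^\xi)'|^2\,\dd t + 2c_W\,\#\bigl(S_{u_y^\xi}\cap A_y^\xi\bigr)\Bigr)\,\dd\HH^{n-1}(y) \le \alpha(A).
\]
Running this for every $\xi$ shows that all slices lie in $\SBV$ with integrable truncated total variations, so Corollary~\ref{slicingCor} certifies $u \in \GSBV(\Om)$; testing coordinate directions and using $f(1) > 0$ from \ref{assumptionf} gives $\nabla u \in L^2$ and $\HH^{n-1}(S_u) < \infty$, hence $u \in \GSBV^2(\Om)$. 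Rewriting the left-hand side through items (1),(2) of Theorem~\ref{slicingProp} and the integral-geometric identity $\int_{A_\xi}\#\bigl((S_u)_y^\xi\cap A_y^\xi\bigr)\,\dd\HH^{n-1}(y) = \int_{S_u\cap A}|\scprod{\nu_u}{\xi}|\,\dd\HH^{n-1}$, I obtain, for every $\xi$ and every open $A$,
\[
\int_A f(1)\,|\scprod{\nabla u}{\xi}|^2\,\dd x + 2c_W\int_{S_u\cap A}|\scprod{\nu_u}{\xi}|\,\dd\HH^{n-1} \le \alpha(A).
\]

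The hard part is the final assembly, since a naive sum over orthonormal directions recovers $|\nabla u|^2$ in the bulk but yields $\sum_\xi |\scprod{\nu_u}{\xi}|$ on $S_u$, which is not $\HH^{n-1}(S_u)$. Instead I would set $\lambda \coloneq \LL^n\llcorner\Om + \HH^{n-1}\llcorner S_u$ and let $h_\xi \coloneq f(1)|\scprod{\nabla u}{\xi}|^2\,\1_{\Om\setminus S_u} + 2c_W|\scprod{\nu_u}{\xi}|\,\1_{S_u}$ be the $\lambda$-density of the measure above, so that $\alpha(A) \ge \int_A h_\xi\,\dd\lambda$ for all open $A$ and all $\xi$ in a fixed countable dense set $D \subset \bS^{n-1}$. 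Since $\alpha$ is increasing and superadditive, the standard supremum-of-densities lemma for such set functions gives $\alpha(A) \ge \int_A \sup_{\xi\in D} h_\xi\,\dd\lambda$. The pointwise supremum splits across the two mutually singular parts of $\lambda$: on $\Om\setminus S_u$ it equals $f(1)|\nabla u|^2$ (optimal direction $\nabla u/|\nabla u|$) and on $S_u$ it equals $2c_W$ (optimal direction $\nu_u$), both by continuity of $\xi \mapsto \scprod{\cdot}{\xi}$ and density of $D$. Taking $A = \Om$ then yields $\alpha(\Om) \ge \int_\Om f(1)|\nabla u|^2\,\dd x + 2c_W\HH^{n-1}(S_u) = F(u,1)$, which is precisely the claimed $\liminf$-inequality.
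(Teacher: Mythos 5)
Your proposal is correct and follows essentially the same route as the paper: slice the localized energies along directions $\xi$, apply the one-dimensional Proposition~\ref{oneDimLiminf} to the slices together with Fatou's lemma, certify $u\in\GSBV^2(\Om)$ via Corollary~\ref{slicingCor}, and then recover the isotropic energy by the superadditivity/supremum-of-densities argument. Your ``standard supremum-of-densities lemma'' for increasing superadditive set functions is exactly the tool the paper invokes as \cite[Theorem~1.16]{Bra1998}, so the two proofs differ only in presentation (you make the set function $\alpha$, the measure $\lambda$, and the countable dense family of directions explicit, where the paper packages the same content into the functionals $F^\xi_\eps$ and $F^\xi$).
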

\begin{proof}
  For the proof we use the usual notation in the setting of slicing, introduced in Section~\ref{functionsOfBoundedVariation}. In what follows let $\xi \in \bS^{n-1}$ and  $y \in \Om_{\xi}$, let $A \subset \Omega$ be open and choose $u,v \in L^1(\Om)$ arbitrarily. We define the localized version of \eqref{ApproxFunc} by
  \begin{multline*}
    F_\eps (u,v;A) \coloneq \int_A \bigl( f(v) + \eta_\eps \bigr) \abs{\nabla u}^2 + \varphi_\eps \bigl( W_\eps (v) \bigr) +  \psi_\eps \bigl( \abs{\nabla v} \bigr) \,\dd x \\
    + c_\eps \bigl( \abs{\D^j v}(A) + \abs{\D^c v}(A) \bigr)
  \end{multline*}
  if $u\in H^1(A), v\in \BV(A;[0,1])$ and $F_\eps (u,v;A) \coloneq {+}\infty$ otherwise. Furthermore, we define for $I\subset \R$ open
  \begin{multline*}
    \overline F_\eps (u,v;I) \coloneq \int_I \bigl( f(v) + \eta_\eps \bigr)  \abs{u'}^2 + \varphi_\eps \bigl( W_\eps (v) \bigr) +  \psi_\eps \bigl( \abs{v'} \bigr) \,\dd x \\
    + c_\eps \bigl( \abs{\D^j v}(I) + \abs{\D^c v}(I) \bigr)
  \end{multline*}
  if $u\in H^1(I), v\in \BV(I;[0,1])$ and $F_\eps (u,v;I) \coloneq {+}\infty$ otherwise.
  We additionally set
  \[
    F^\xi_\eps (u,v; A) \coloneq \int_{ A_\xi} \overline F_\eps \bigl( u_y^\xi, v_y^\xi; A_y^\xi \bigr) \dd\LL^{n-1}(y) \,.
  \]
  From Fubini's theorem and Theorem~\ref{slicingProp} we therefore  obtain
  \begin{multline*}
    F^{\xi}_\eps (u,v;A) =   \int_A \bigl( f(v) + \eta_\eps \bigr) \bigabs{\scprod{\nabla u}{\xi}}^2 + \varphi_\eps \bigl( W_\eps (v) \bigr) +  \psi_\eps \bigl( \abs{\scprod{\nabla v}{\xi}} \bigr) \,\dd x \\
    + c_\eps \bigabs{\scprod{\D^j v}{\xi}}(A) + c_\eps \bigabs{\scprod{\D^c v}{\xi}} (A)
  \end{multline*}
  if $\abs{\scprod{\D u}{\xi}}$ is absolutely continuous with respect to $\LL^n$, and $F^\xi_\eps(u,v ;A) = {+}\infty$ otherwise.
  Thus, there clearly holds
  \begin{equation}\label{sliceineq}
    F^{\xi}_\eps (u,v;A) \leq F_\eps(u,v;A) \,.
  \end{equation}
  From Proposition~\ref{oneDimLiminf} we know that $\overline F (u,v;I) \leq \Gliminf_{\eps \to 0} \overline F_\eps (u,v ;I)$ with
  \begin{equation*}
    \overline F  (u,v; I) \coloneq
    \left\{
      \begin{aligned}
	&\int_I f(1) \abs{u'}^2 \,\dd x + 2 c_0 \# S_u && \text{for } u\in \SBV^2 (I), v=1 \text{ a.e.,} \\
	&{+ \infty} && \text{otherwise.}
      \end{aligned}
    \right.
  \end{equation*}
  Choosing
  \[
    F^\xi (u,v;A) \coloneq \int_{A_\xi} \overline F (u_y^\xi, v_y^\xi; A_y^\xi) \,\dd \LL^{n-1}(y) \,,
  \]
  there holds for all sequences $(u_j)$ and $(v_j)$ with $u_j \to u$ and $v_j \to v$ in $L^1(\Om)$ as $j\to \infty$
  \begin{equation*}
    F^\xi (u,v;A) \leq \int_{A_\xi} \liminf_{j\to \infty} \overline F_\eps \bigl((u_j)_y^\xi, (v_j)_y^\xi; A_y^\xi \bigr) \,\dd\LL^{n-1}(y)\,.
  \end{equation*}
  Fatou's Lemma and \eqref{sliceineq} yield
  \begin{equation} \label{fatouliminf}
    F^\xi (u,v;A) \leq \Gliminf_{\eps \to 0} F^\xi_\eps (u,v; A) \leq \Gliminf_{\eps \to 0} F_\eps (u,v; A)\,.
  \end{equation}

  Moreover, %
  by construction,
  $F^\xi (u,v;A)$ is finite if and only if %
  for a.a.~$y \in A_\xi$ there holds $v_y^\xi =1$ a.e. on $A_y^\xi$, $u_y^\xi \in \SBV^2(A_y^\xi)$ as well as
  \begin{equation*}
    \int_{A_\xi} \int_{A_y^\xi} f(1) \bigabs{(u_y^\xi)'}^2 \,\dd x + 2 c_0 \# S_{u_y^\xi} \,\dd \LL^{n-1} (y) < \infty \,.
  \end{equation*}
  Since there holds for every $M > 0$ and every $u\in L^1(\Om)$ with $u_y^\xi \in \SBV^2(A_y^\xi)$ for a.a. $y\in A_\xi$
  \begin{align*}
    &\int_{ A_\xi } \bigabs{\D \bigl( (-M) \vee u_y^\xi \wedge M \bigr)} \bigl( A_y^\xi \bigr) \dd \LL^{n-1} (y) \\
    \leq{}& \int_{A_\xi} \frac14 \LL^1(A_y^\xi) +
\int_{A_y^\xi} \bigabs{\bigl( (-M) \vee u_y^\xi \wedge M \bigr)'}^2 \,\dd x
   + 2M \# S_{u_y^\xi} \,\dd \LL^{n-1} (y) \\
    \leq{}& \LL^n(A) + C \int_{A_\xi} \int_{A_y^\xi} f(1) \bigabs{\bigl( (-M) \vee u_y^\xi \wedge M \bigr)'}^2 \,\dd x + 2 c_0 \# S_{u_y^\xi} \,\dd \LL^{n-1} (y) \,,
  \end{align*}
  we get by Corollary~\ref{slicingCor} that $F^{\xi} (u,v; A)$ is finite only if $u \in \GSBV^2 (A)$ and $v=1$ a.e.~in $A$. Hence,
  \begin{equation*}
    F^\xi (u,v;A) = \int_A f(1) \abs{\scprod{\nabla u}{\xi}}^2 \,\dd x + 2 c_0 \int_{S_u} \bigabs{\scprod{\nu_u}{\xi}} \,\dd \HH^{n-1}
  \end{equation*}
  if $u \in \GSBV^2(A)$ and $v = 1$ a.e.~in $A$, and $F^\xi (u,v;A)={+}\infty$ otherwise.

  Since $A$ and $\xi$ were chosen arbitrarily, if $v=1$ a.e.~in $A$, then \cite[Theorem~1.16]{Bra1998} and \eqref{fatouliminf} imply
  \begin{align*}
    F(u,v ;A) &= \int_A f(1) \sup_{\xi\in \bS^{n-1}} \abs{\scprod{\nabla u}{\xi}}^2 \,\dd \LL^{n} + 2 c_0 \int_{S_u} \sup_{\xi\in \bS^{n-1}} \bigabs{\scprod{\nu_u}{\xi}} \HH^{n-1}\\
              &\leq \Gliminf_{\eps \to 0} F_\eps (u,v;A) \,. %
  \end{align*}
  Otherwise, the $\liminf$-inequality
  follows directly from~\eqref{fatouliminf} with $\xi$ arbitrary.
\end{proof}

The following proposition now shows the $\limsup$\nobreakdash-inequality.
\begin{proposition} \label{limsupIneq}
  In the setting of Theorem~\ref{mainTheorem} there holds
  \[
    \Glimsup_{\eps \to 0} F_\eps (u,v) \leq F(u,v) \quad\tforall u,v \in L^1(\Omega)\,.
  \]
\end{proposition}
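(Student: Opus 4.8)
The plan is to establish the $\limsup$-inequality only in the nontrivial case $u\in\GSBV^2(\Om)$, $v=1$ a.e.: otherwise $F(u,v)=+\infty$ and any sequence converging to $(u,v)$ in $L^1(\Om)\times L^1(\Om)$ (for instance the constant one) already satisfies \eqref{limsupInequality}. For the nontrivial case I would argue in three stages, each reducing to the previous one through the lower semi-continuity of $\Glimsup_{\eps\to0}F_\eps$ with respect to $L^1$-convergence recalled in Section~\ref{GammaConvergence}. First I construct an explicit recovery sequence for the ``nice'' functions supplied by the density Theorem~\ref{densityResult}; then I reach arbitrary $u\in\SBV^2(\Om)\cap L^\infty(\Om)$; and finally I remove the boundedness by truncation to cover all of $\GSBV^2(\Om)$.

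For the core construction, let $u$ be as in Theorem~\ref{densityResult}, so that $\overline{S_u}$ is polyhedral, $\HH^{n-1}(\overline{S_u}\setminus S_u)=0$ and $u\in W^{1,\infty}(\Om\setminus S_u)\cap L^\infty(\Om)$. Fix a width $a_\eps>0$ (chosen below) and let $T_\eps\coloneq\{x\in\Om:\dist(x,S_u)<a_\eps\}$ be the thin tube around the jump set. I define the phase field as the piecewise constant function $v_\eps\coloneq 1-\1_{T_\eps}$, equal to $0$ inside $T_\eps$ and to $1$ outside. This is exactly where the $\BV$-relaxation pays off: since $v_\eps$ is allowed to jump, I do not need to solve for an optimal transition profile and can simply take an indicator. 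For the competitor $u_\eps$ I keep $u_\eps=u$ on $\Om\setminus T_\eps$ and, inside $T_\eps$, interpolate linearly in the normal direction across each polyhedral face between the two traces $u^-$ and $u^+$, so that $u_\eps\in H^1(\Om)$; the lower-dimensional edges and corners of the polyhedral set are negligible. Because $\abs{T_\eps}\to0$ one has $(u_\eps,v_\eps)\to(u,1)$ in $L^1(\Om)\times L^1(\Om)$.

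Next I would estimate $F_\eps(u_\eps,v_\eps)$ term by term. The potential contributes $\int_{\Om\setminus T_\eps}\varphi_\eps(W_\eps(1))$, which vanishes by \ref{assumptionPhi}, plus $\int_{T_\eps}\varphi_\eps(W_\eps(0))\approx 2a_\eps\,\varphi_\eps(W_\eps(0))\,\HH^{n-1}(S_u)$. Since $v_\eps$ is piecewise constant, $\nabla v_\eps=0$ and $\D^c v_\eps=0$ a.e., so $\int_\Om\psi_\eps(\abs{\nabla v_\eps})=\psi_\eps(0)\abs{\Om}\to0$ by \ref{assumptionPsi} and the Cantor term drops; the jump term $c_W\abs{\D^j v_\eps}(\Om)$ equals $c_W$ times the perimeter of the two sheets $\{\dist(\cdot,S_u)=a_\eps\}$ carrying a unit jump, which tends to $2c_W\HH^{n-1}(S_u)$ for polyhedral $S_u$. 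Finally the Dirichlet term is $\int_{\Om\setminus T_\eps}(f(1)+\eta_\eps)\abs{\nabla u}^2\to\int_\Om f(1)\abs{\nabla u}^2$, while on $T_\eps$ one has $f(v_\eps)=f(0)=0$ by \ref{assumptionf}, so only $\eta_\eps\int_{T_\eps}\abs{\nabla u_\eps}^2$ remains, and the normal interpolation gives $\abs{\nabla u_\eps}\lesssim a_\eps^{-1}$, whence this contribution is $\lesssim(\eta_\eps/a_\eps)\HH^{n-1}(S_u)$. The two error terms to annihilate are therefore $a_\eps\varphi_\eps(W_\eps(0))$ and $\eta_\eps/a_\eps$, so I need $a_\eps\to0$ with $\eta_\eps\ll a_\eps\ll 1/\varphi_\eps(W_\eps(0))$. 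The hard part is precisely that such a choice exists, and it does exactly because \ref{assumptionEta} gives $\eta_\eps\varphi_\eps(W_\eps(0))\to0$ (note \ref{assumptionPhi} forces $\varphi_\eps(W_\eps(0))\to+\infty$, hence $\eta_\eps\to0$); concretely $a_\eps\coloneq\sqrt{\eta_\eps/\varphi_\eps(W_\eps(0))}$ works, and balancing these competing scales is where all the hypotheses on $\varphi_\eps,W_\eps,\eta_\eps,f$ are consumed. Collecting the estimates yields $\limsup_{\eps\to0}F_\eps(u_\eps,v_\eps)\le\int_\Om f(1)\abs{\nabla u}^2+2c_W\HH^{n-1}(S_u)=F(u,1)$.

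With the nice case in hand, I pass to the general one using lower semi-continuity of $\Glimsup_{\eps\to0}F_\eps$. Given $u\in\SBV^2(\Om)\cap L^\infty(\Om)$, Theorem~\ref{densityResult} provides nice $w_j\to u$ in $L^1(\Om)$ with $\nabla w_j\to\nabla u$ in $L^2(\Om)$ and $\HH^{n-1}(S_{w_j})\to\HH^{n-1}(S_u)$, hence $F(w_j,1)\to F(u,1)$; therefore $\Glimsup F_\eps(u,1)\le\liminf_j\Glimsup F_\eps(w_j,1)\le\liminf_j F(w_j,1)=F(u,1)$. Finally, for arbitrary $u\in\GSBV^2(\Om)$ the truncations $u^M=(-M)\vee u\wedge M$ belong to $\SBV^2(\Om)\cap L^\infty(\Om)$, converge to $u$ in $L^1(\Om)$ with $\nabla u^M\to\nabla u$ in $L^2(\Om)$ and, since $S_u=\bigcup_{M>0}S_{u^M}$ with the union increasing, $\HH^{n-1}(S_{u^M})\nearrow\HH^{n-1}(S_u)$, so $F(u^M,1)\to F(u,1)$ and the same lower semi-continuity argument closes the proof.
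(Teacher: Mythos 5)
Your proposal follows essentially the same route as the paper's proof: an indicator (piecewise-constant) phase field on a tube of width $\delta_\eps$ around the polyhedral jump set, with exactly the same balance $\delta_\eps=\sqrt{\eta_\eps/\varphi_\eps(W_\eps(0))}$ driven by \ref{assumptionEta}, followed by the density Theorem~\ref{densityResult} plus lower semi-continuity of $\Glimsup_{\eps\to 0}F_\eps$, and finally truncation for $\GSBV^2(\Om)$. The only deviation is cosmetic: the paper defines $u_\eps=(1-\phi_\eps)u$ via a cutoff of the distance function $\tau(x)=\dist(x,S_u)$ (which is globally defined, satisfies $\abs{\nabla\tau}=1$ a.e., and so avoids the edge/corner ambiguities of your face-by-face normal interpolation that you dismiss as negligible), but this changes nothing in the structure or the estimates of the argument.
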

\begin{proof}
  If $u\notin \GSBV^2(\Om)$ or $v \neq 1$ on some set with non-zero measure the assertion is obvious. We first show that the result holds for $u$ replaced by $w \in \SBV^2(\Om) \cap L^\infty (\Om)$ for which \ref{firstDensityProp}--\ref{middleDensityProp} in Theorem~\ref{densityResult} (replacing $w_j$ by $w$) hold.

  For this purpose choose for every $\eps > 0$ some $\delta_\eps>0$ such that $\frac{\eta_\eps}{\delta_\eps} \to 0$ as $\eps \to 0$ but still $\delta_\eps \varphi_\eps(W_{\eps}(0)) \to 0$ as $\eps \to 0$, for instance
  \begin{equation*}
    \delta_\eps = \frac{\sqrt{\eta_\eps}}{\sqrt{\varphi_\eps(W_\eps(0))}} \,.
  \end{equation*}
  Take some smooth cutoff function $\phi \colon \R \to [0,1]$ with $\phi=1$ on $B_{\half}(0)$ and $\phi = 0$ on $\Omega \setminus B_{1}(0)$, and define $\tau(x) = \dist(x, S_w)$ for all $x \in \Om$. Then, we set $\phi_\eps (x)= \phi (\tau(x) / \delta_\eps)$ for all $x\in \Om$, and
  we fix for every $\eps >0$ the function  $w_\eps = (1 - \phi_\eps) w$, for which holds $w_\eps \in H^1(\Om)$, $w_\eps = w$ on $\Om \setminus B_{\delta_\eps} (S_w)$ and $w_\eps \to w$ in $L^1(\Om)$ as $\eps \to 0$. Furthermore we define
  \begin{equation*}
    v_\eps =
      \begin{dcases}%
		0 & \text{on } B_{\delta_\eps} (S_{w}) \cap \Om \,, \\
	1 & \text{elsewhere.}
      \end{dcases}
  \end{equation*}
  Since $\overline{S_w}$ is polyhedral there holds $\HH^{n-1} (\partial B_{\delta_\eps} (S_w) \cap \Om ) < \infty$. Consequently, we have $v_\eps \in \BV(\Om ;[0,1])$ for all $\eps > 0$.

  With this at hand, recalling \ref{assumptionf},  we get
  \begin{multline}
    \label{limsupIneqProof}
      F_\eps (w_\eps, v_\eps)    \leq \int_\Om f(1) \abs{\nabla w}^2 \,\dd x + \eta_\eps \int_\Om \abs{\nabla w_\eps}^2 \, \dd x \\
      + \LL^{n}( \Omega ) \bigl( \varphi_\eps(W_\eps(1)) + \psi_\eps(0) \bigr) +  \LL^{n}\bigl(B_{\delta_\eps} (S_{w}) \bigr) \varphi_\eps(W_\eps(0)) \\
      + \HH^{n-1} \bigl(\partial B_{\delta_\eps} (S_{w}) \bigr) c_\varepsilon  \,.
  \end{multline}
  By the choice of $w_\eps$, the fact that $\norm{w}_{L^\infty(\Om)} \leq M$ and that $\abs{ \nabla \tau(x)} = 1$ a.e. on~$\Om$ (see~\cite[Lemma 3.2.34]{Fed1969}) we get on $B_{\delta_\eps} (S_w)$
  \begin{align*}
    \abs{\nabla w_\eps}
    &\leq \abs{w \nabla \phi_\eps} + \abs{(1-\phi_\eps) \nabla w} \leq \frac{M}{\delta_\eps} \norm{\phi '}_{L^\infty(\Om)} + \abs{\nabla w}\,,
  \end{align*}
  which implies
  \begin{multline*}
    \eta_\eps \int_\Om \abs{\nabla w_\eps}^2 \,\dd x  \leq \eta_\eps \int_{\Om \setminus B_{\delta_\eps}(S_w)} \abs{\nabla w}^2 \,\dd x
    + C \frac{\eta_\eps}{\delta_\eps^2} \LL^n\bigl( B_{\delta_\eps} (S_w) \bigr)\\ + 2 \eta_\eps \int_{B_{\delta_\eps} (S_w)} \abs{\nabla w}^2 \, \dd x
  \end{multline*}
  with $C = 2 M^2 \norm{\phi' }^2_{L^\infty (\Om)}$ independent of $\eps$. The first and the last term obviously converge to 0 as $\eps \to 0$. For the second term we remark that for a polyhedral set, the Hausdorff measure coincides with the Minkowski content (see, e.g., \cite[Theorem~3.2.29]{Fed1969}), so that
  \begin{equation}
    \label{minkowski}
    \frac{\LL^{n}\bigl(B_{\delta_\eps} ( \overline{S_{w}} ) \bigr)}{2\delta_\eps}  \to \HH^{n-1} \bigl( \overline{S_{w}} \bigr)= \HH^{n-1} \bigl( S_{w} \bigr) < \infty \quad  \text{as } \eps \to 0 \,.
  \end{equation}
  As a consequence,  recalling that $\frac{\eta_\eps}{\delta_\eps} \to 0$ we get
  \begin{equation*}
    C \frac{\eta_\eps}{\delta_\eps^2} \LL^n\bigl( B_{\delta_\eps} (S_w) \bigr) \to 0 \quad \text{as } \eps \to 0 \,,
  \end{equation*}
  and therefore
  \begin{equation*}
    \eta_\eps \int_\Om \abs{\nabla w_\eps}^2 \,\dd x \to 0 \quad \text{as } \eps \to 0.
  \end{equation*}
  Additionally, \eqref{minkowski} and $\delta_\eps \varphi_\eps (W_\eps (0)) \to 0$ as $\eps \to 0$  imply
  \begin{equation*}
    \LL^{n}\bigl(B_{\delta_\eps} (S_{w}) \bigr) \varphi_\eps(W_\eps(0)) \to 0 \quad \text{as } \eps \to 0\,.
  \end{equation*}
  Furthermore, there holds
  \begin{equation*}
    \HH^{n-1} \bigl(\partial B_{\delta_\eps} (S_{w}) \bigr) \to 2 \HH^{n-1} (S_{w}) \quad \text{as } \eps \to 0 \,,
  \end{equation*}
  which is again due to $\overline{S_w}$ being a polyhedral set.

  Applying the previous three convergence statements in \eqref{limsupIneqProof} together with the limit behaviour of $\varphi_\eps (W_\eps (1))$, $\psi_\eps(0)$ and $c_\eps$ from \ref{assumptionPhi} and \ref{assumptionPsi}, we get
  \begin{equation} \label{denseLimsupIneq}
    \limsup_{\eps \to 0} F_\eps (w_\eps, v_\eps) \leq F(w, 1) \,.
  \end{equation}

  If $u \in \GSBV^2 (\Omega)$ we have for every $M>0$  that  $u^M \in \SBV^2(\Om) \cap L^\infty( \Om)$ with $u^M \coloneq (-M) \vee u \wedge M$, and we can find a sequence $(w_j)$ in $\SBV^2(\Om) \cap L^\infty (\Om)$ such that \ref{firstDensityProp}--\ref{lastDensityProp}  in Theorem~\ref{densityResult} (replacing $u$ by $u^M$) holds. Together with the lower semi-continuity of $\Glimsup F_\eps$ in $L^1(\Om) \times L^1(\Om)$ and \eqref{denseLimsupIneq} we deduce
  \[
    \Glimsup_{\eps\to 0} F_\eps (u^M, 1) \leq \liminf_{j\to \infty} \Glimsup_{\eps\to 0} F_\eps (w_j,1) \leq \liminf_{j\to \infty} F(w_j, 1) = F(u^M, 1) \,.
  \]

  Obviously, there holds $\norm{\nabla u^M}_{L^2(\Om)} \leq \norm{\nabla u}_{L^2(\Om)} $, and from $S_u = \bigcup_{M>0} S_{u^M}$ (see Section~\ref{functionsOfBoundedVariation}) follows that $\HH^{n-1} (S_{u^M}) \leq \HH^{n-1} (S_u)$. Thus, using again the lower semi-continuity of $\Glimsup F_\eps$ we get
  \begin{equation*}
	  \Glimsup_{\eps\to 0} F_\eps (u, 1) \leq \liminf_{M\to \infty} \Glimsup_{\eps\to 0} F_\eps (u^M,1) \leq \liminf_{M \to \infty} F(u^M, 1) \leq F(u, 1) \,,
  \end{equation*}
   which concludes the proof.
\end{proof}

The proof of Theorem~\ref{mainTheorem} is now a direct consequence of Proposition~\ref{multDimLiminf} and Proposition~\ref{limsupIneq}.

\section{Numerical Examples}
\label{numerics}

The aim of this section is to numerically compare our new approximation from Corollary \ref{corollary} with the classical Ambrosio-Tortorelli approach. We aim for a simple and easy to implement algorithm in order to illustrate the differences between those two models and justify our theory.
As an application for the numerical computations  we choose the image segmentation problem already described in the introduction.

Thus, for $\Om \subset \R^n$ being non-empty, open, bounded and with Lipschitz boundary, we seek to minimize the following functional with respect to $u\in \SBV^2(\Om) \cap L^\infty(\Om)$
\begin{equation}
  \label{Mumford-Shah}
  E(u) =
      \frac{\alpha}{2} \int_\Om \abs{\nabla u}^2 \,\dd x + \frac{\beta}{2} \int_\Om \abs{u - g}^2 \,\dd x + \gamma \HH^1(S_u) \,,
\end{equation}
where $g \in L^\infty (\Om)$ is the original image and $\alpha, \beta, \gamma > 0$ are the parameters influencing the smoothing and segment detection in the solution. They have, of course, to be chosen with care in order to get a sensible result.

Using now Corollary~\ref{corollary} we can approximately minimize $E$ by minimizing
\begin{multline}
  \label{approxMumford-Shah}
  G_\eps (u,v) \coloneq \frac{\alpha}{2} \int_\Om (v^2 + \eta_\eps) \abs{\nabla u}^2 \,\dd x + \frac{\beta}{2} \int_\Om \abs{u - g}^2 \,\dd x \\+ \frac{\gamma}{2\eps} \int_{ \Om} (1-v) \,\dd x + \frac{\gamma}{2} \abs{\D v} (\Om) \,,
\end{multline}
for small $\eps >0$, which we also refer to as the \emph{$\BV$-model}.

On the other hand we consider the elliptic approximation \eqref{AmbrosioTortorelli}, introduced in~\cite{AmbTor1992}:
\begin{multline}
  \label{classicalAmbrosioTortorelli}
  \AT_\eps (u,v) \coloneq \frac{\alpha}{2} \int_\Om (v^2 + \eta_\eps) \abs{\nabla u}^2 \,\dd x + \frac{\beta}{2} \int_\Om \abs{u - g}^2 \,\dd x \\
  + \gamma \int_{ \Om} \frac{1}{4 \epsilon} (1-v)^2  + \eps \abs{\nabla v}^2 \,\dd x
\end{multline}
for $u \in H^1(\Om)$ and $v\in H^1 (\Om; [0,1])$, which we refer to as the \emph{$H^1$-model} (note that we ``redefined'' $\AT_\eps$ as in the following, we will only use \eqref{classicalAmbrosioTortorelli} such that there is no chance of confusion).

For the discretization of these functionals we consider a 2\nobreakdash{-}dimensional image with its natural pixel grid with pixel length $h>0$. If the image is given by $M \times N$ pixels, we use the discrete grid $\Om_h = \{h, \ldots, Mh\} \times \{h, \ldots Nh\}$ and we identify the piecewise constant functions $u, g, v$ as elements in the Euclidean space $\R^{M \times N}$. Precisely, one sets $u = \sum_{ij} u_{ij} \1_{[(i-1)h,ih) \times [(j-1)h,jh)}$ for $(u_{ij}) \in \R^{M \times N}$, where $\1_A$ denotes the characteristic function of $A \subset \R^2$, i.e., $\1_A = 1$ on $A$ and $\1_A = 0$ on~$\R^2 \setminus A$.

For the discretization of the appearing gradients and the total variation we use a finite difference scheme. For this purpose we define the finite difference operator with zero Neumann boundary condition $\nabla_h \colon \R^{M\times N} \to \R^{2\times  M\times N}$ by
\[
(\nabla_h u)_{ij} = \frac{1}{h} \bigl( (\partial_1^+ u)_{ij} , (\partial_2^+ u)_{ij} \bigr)
\quad\tfor u\in \R^{M \times N}
\]
with
\begin{align*}
  (\partial_1^+ u)_{ij} &\coloneq
  \begin{dcases}
   u_{i+1,j} - u_{ij} & \quad\tfor i\in\{ 1,\dotsc,M - 1 \}\,,\\
   0 & \quad \tfor i = M \,,\\
   \end{dcases} \\
  (\partial_2^+ u)_{ij} &\coloneq
  \begin{dcases}
  u_{i,j+1} - u_{ij} &\tfor j\in\{ 1,\dotsc,N - 1 \}\,, \\
  0 & \tfor   j = N\,.
  \end{dcases}
\end{align*}
Furthermore, we denote %
the adjoint of $\nabla_h$ by $-\diver_h$, i.e. for $w \in \R^{2\times M \times N}$ the operator $\diver_h \colon \R^{2\times M \times N} \to \R^{M\times N}$ is defined by
\begin{equation*}
(\diver_h w)_{ij} \coloneq \frac{1}{h} \Bigl( \bigl(  \partial_1^- w^{(1)} \bigr)_{ij} + \bigl( \partial_2^- w^{(2)} \bigr)_{ij} \Bigr) \,,
\end{equation*}
where for all $u\in \R^{M\times N}$
\begin{align*}
(\partial_1^- u)_{ij} &\coloneq
\begin{dcases}
	u_{1j} & \tfor i=1 \,, \\
	u_{ij} - u_{i-1,j} & \tfor i \in \{2,\dotsc, M-1\} \,, \\
	- u_{M-1,j} & \tfor i =M \,,
\end{dcases} \\
(\partial_2^- u)_{ij} &\coloneq
\begin{dcases}
u_{i1} & \tfor j=1 \,, \\
u_{ij} - u_{i,j-1} & \tfor  j\in \{2,\dotsc, N-1\} \,, \\
- u_{i,N-1} & \tfor j =N \,.
\end{dcases}
\end{align*}
For functions $u,v \in \R^{M \times N}$, operations such as the product $uv$ (or $u \cdot v$), the minimum $u \wedge v$, the maximum $u \vee v$, and the square $u^2$ are always meant to be element-wise.
With $\norm{u}_2$, $\norm{u}_1$ and $\norm{u}_\infty$ we respectively refer to the Frobenius norm, the $\ell^1$-norm of $u$ vectorized, and the maximum norm of $u$. The Frobenius inner product of $u$ and $v$ is written as $\scprod{u}{v}$. For any field $q = (q^{(1)}, q^{(2)}) \in \R^{2\times M\times N}$, like $\nabla_h u$ for $u\in \R^{M \times N}$, we denote by $\abs{q}$ the Euclidean norm along the first axis, i.e. $\abs{q} \in \R^{M\times N}$
\begin{equation*}
  \abs{q}_{ij} = \sqrt{\bigl(q^{(1)}_{ij} \bigr)^2 + \bigl(q^{(2)}_{ij} \bigr)^2}.
\end{equation*}
With this strategy we can define the discretized versions of \eqref{approxMumford-Shah} and \eqref{classicalAmbrosioTortorelli}, respectively, for all $u , v \in \R^{M\times N}$ by
\begin{multline*}
  G^h_\eps (u,v) \coloneq \frac{\alpha}{2} \bignorm{v \abs{\nabla_h u }}_2^2 + \frac{\beta}{2} \norm{u -
    g}_2^2 \\
+ \frac{\gamma}{2\eps} \norm{\1 - v}_1 +
  \frac{\gamma}{2} \bignorm{\abs{\nabla_h v}}_1 + \chi_{\{0\leq v \leq \1\}} (v)
\end{multline*}
and
\begin{multline*}
  \AT^h_\eps (u,v) \coloneq \frac{\alpha}{2} \bignorm{v \abs{\nabla_h u}}_2^2 + \frac{\beta}{2} \norm{u - g}_2^2 \\
   + \frac{\gamma}{4 \eps} \norm{\1 - v}_2^2 + \gamma \eps \bignorm{\abs{\nabla_h v}}^2_2 + \chi_{\{0\leq v \leq \1\}} (v) \,.
\end{multline*}
The symbol $\1$ refers to the discretized function that is one almost everywhere.
Note that we neglected the factor~$h^2$ in the functionals since it does not change their minimum. Moreover, we chose $\eta_\eps = 0$ here, because in the discrete setting, the problem of finding a minimizer stays well-posed for this choice.

\begin{remark}
  \label{rmrk:choice-of-eps}
  The choice of the recovery sequence in the proof of Proposition~\ref{limsupIneq} suggests that the width of the detected contours represented by the phase field variable $v$ correlates with the parameter $\eps$. The precise relation between $\eps$ and the width of the phase field is, however, not known. Examining the structure of the approximating functionals, we expect that it depends, in particular, on the trade-off between the two terms $v^2 \norm{\nabla u}^2_\infty$ and $\frac{1}{4\eps}(1-v)$.

   Although, we would like to have the width of the phase field and therefore $\eps$ extremely small, there is a limit of choice depending on the pixel size $h$. To be more precise, choosing $h_\eps > 0$ depending on $\eps$, it is well known that $\AT_\eps^{h_\eps}$ $\Gamma$\nobreakdash{-}converges as $\eps \to 0$, provided that $h_\eps/{\eps} \to 0$ as $\eps \to 0$ (see \cite{Bou1999,BelCos1994}). We believe that a corresponding statement is also true for the considered $\BV$-phase field approximation. A study of this is, however, outside the scope of the present paper.
\end{remark}

The difficulty in finding a minimizer lies in the non-convex, and for $G_\eps^h$ also non-smooth, structure. In previous works an alternating minimization scheme has been commonly used, exploiting the fact that the functionals are convex in each variable separately (see \cite{Bou1999,ArtForMicPer2015,AlmBel2018}). However, in this work we choose a more recent approach, which is the proximal alternating linearized minimization (in short PALM) presented in \cite{BolSabTeb2014}. This algorithm is a form of an alternating gradient descent procedure, for which we do not have to solve any linear equation. This makes the algorithm also faster than the alternating minimization scheme, especially for rather large images. Our experience also showed no significant difference in the results.

For the PALM algorithm one uses the fact that the objective functional can be written as $J(u,v) + K(u) + H(v)$. Then, for some initial value $u^0, v^0 \in \R^{M \times N}$ we set for each $k \in \N$
\begin{align}
  \label{eq:prox-u}
  u^{k} &= \prox_{t_k}^K \bigl( u^{k-1} - t_k \nabla_u J(u^{k-1},v^{k-1}) \bigr) \,, \\
  \label{eq:prox-v}
  v^{k} &= \prox_{s_k}^H \bigl( v^{k-1} - s_k \nabla_v J(u^k, v^{k-1}) \bigr) \,,
\end{align}
where $t_k, s_k > 0$.
By $\prox_t^g$ we denote the proximal operator with step size $t>0$:
\begin{equation*}
  \prox_t^g (w) = \argmin_{u\in \R^{M\times N}} \biggl( \frac{1}{2t} \norm{u - w}_2^2 + g(u) \biggr) \,.
\end{equation*}
For the right choices of the step sizes $t_k$ and $s_k$ above one can show that this scheme converges to a critical point of $J(u,v) + K(u) + H(v)$ as $k\to \infty$ (see \cite[Proposition~3.1]{BolSabTeb2014}). Namely, we need to choose $t_k = \frac{\theta_1}{L_1(v_{k-1})}$ and $s_k = \frac{\theta_2}{L_2(u_k)}$ for some $\theta_1, \theta_2 \in (0,1)$, where $L_1 (v)$ and $L_2(u)$ are Lipschitz constants of $u \mapsto \nabla_u J(u,v)$ and $v \mapsto \nabla_v J(u,v)$, respectively. Unfortunately, %
convergence rates are not known,
so that as a stopping criterion, we are limited to measure the change of the variables in each iteration. We stop the scheme when this change drops under a specified threshold or if a certain number of iterations is reached.

We will now have a closer look on how the algorithm looks like for $G_\eps^h$ and $\AT_\eps^h$ separately.
\subsection*{$\BV$-model}
We write $G_\eps^h (u,v) = J(u,v) + K(u) + H(v)$ with
\begin{equation}
  \label{eq:split-J-G}
  J(u,v) = \frac{\alpha}{2} \bignorm{v \abs{\nabla_h u}}_2^2 \,, \quad K(u) = \frac{\beta}{2} \norm{u-g}_2^2
\end{equation}
and
\begin{equation*}
  H(v) = \frac{\gamma}{2\eps} \norm{\1 - v}_1 + \frac{\gamma}{2} \bignorm{\abs{\nabla_h v}}_1 + \chi_{\{0 \leq v \leq 1\}} (v) \,.
\end{equation*}
We have
\begin{equation*}
  \nabla_u J(u,v) = -\alpha \diver_h\bigl( v^2 \nabla_h u\bigr) \quad\text{and}\quad \nabla_v J(u,v) = \alpha v \abs{\nabla_h u}^2\,.
\end{equation*}
Since the operator norm  of $\nabla_h$ is strictly below  $\frac{\sqrt {8}}{h}$ (see, e.g. \cite{Chambolle2004,Bredies2018}), we can choose for some $\theta \in (0,1)$
\begin{equation}
  \label{eq:step-size}
  t_k = \frac{h^2}{8 \alpha} \quad\text{and}\quad s_k = \frac{\theta}{\alpha \bignorm{\abs{\nabla_h u^k}^2}_\infty} \,,
\end{equation}
such that $t = t_k$ is constant throughout the algorithm.

As a simple computation shows, solving \eqref{eq:prox-u} is then equivalent to
\begin{equation}
  \label{eq:solving-u}
    u^k = \frac{ \bar u^k  + t \beta g}{1 + t \beta} \quad \text{with} \quad \bar u^k = u^{k-1} + t \alpha \diver_h \bigl((v^{k-1})^2 \nabla_h u^{k-1} \bigr) \,.
  \end{equation}
  By completing squares and ignoring constant terms the problem \eqref{eq:prox-v} can be equivalently reformulated to
\begin{equation}
  \label{eq:min-v}
  v^k \in  \argmin_{v\in \R^{M\times N}} \biggl( \frac{1}{2} \Bignorm{v - \bar v^k - \frac{\gamma s_k}{2 \eps} \1}_2^2 + \frac{\gamma s_k}{2}  \bignorm{\abs{\nabla_h v}}_1 + \chi_{\{0 \leq v \leq \1\}} (v) \biggr)
\end{equation}
with $\bar v^k = v^{k-1} - s_k \alpha v^{k-1} \abs{\nabla_h u^{k}}^2$. Since the non-smooth term $\norm{\abs{\nabla_h v}}_1$ is still present, this minimization can not be solved directly. Instead we tackle the problem with the algorithm introduced by A. Chambolle and T. Pock in \cite{ChaPoc2011}, solving the corresponding primal-dual problem.  Therefore, we define for all $v \in \R^{M \times N}$ and $w \in \R^{2 \times M \times N}$ the functions
\begin{equation*}
  P_k (v) = \frac{1}{2} \Bignorm{v - \bar v^k - \frac{\gamma s_k}{2 \eps} \1}_2^2 + \chi_{\{0 \leq v \leq 1\}} (v) \quad\text{and}\quad Q_k(w) = \frac{\gamma s_k}{2 h} \bignorm{\abs{w}}_1\,,
\end{equation*}
such that \eqref{eq:min-v} is equivalent to
\begin{equation}
  \label{primalProblem}
  v^k \in \argmin \bigl\{ P_k (v) + Q_k (\nabla_1 v) : v \in \R^{M \times N} \bigr\} \,.
\end{equation}
Here, $\nabla_1$ is the forward difference operator $\nabla_h$ for $h=1$.

The corresponding primal-dual saddle point problem is given by
\begin{equation}
  \label{saddlePointProblem}
  \min_{p\in \R^{M\times N} } \max_{q \in \R^{2 \times M \times N}} \bigl(\scprod{\nabla_1 p}{q} + P_k(p) - Q_k^\ast(q) \bigr)
\end{equation}
where $Q^\ast_k$ denotes the convex conjugate of $Q_k$, i.e., $Q_k^\ast = \chi_{\{ \norm{\abs{\cdot}}_\infty \leq \frac{\gamma s_k}{2 h} \}}$. Clearly, for any solution $(p, q)$ of \eqref{saddlePointProblem} we have that $v^k = p$ is
a solution of
\eqref{primalProblem}. We solve \eqref{saddlePointProblem} with \cite[Algorithm~1]{ChaPoc2011}. Namely, for $ 0 < \tau^2 \leq \frac{1}{8}$ and for some  $p_k^0 \in \R^{M \times N}$, $q_k^0 \in \R^{2 \times M \times N}$ as well as $\hat{p}_k^0 \coloneq p_k^0$ we define for all $\ell \in \N$
\begin{align}
  \label{qStep}
  q_k^{\ell} &= \prox^{Q_k^\ast}_{\tau} \bigl(q_k^{\ell - 1} + \tau \nabla_1 \hat p_k^{\ell - 1} \bigr) \,, \\
  \label{pStep}
  p_k^{\ell} &= \prox^{P_k}_{\tau} \bigl(p_k^{\ell - 1} + \tau \diver_1 q_k^{\ell} \bigr) \,, \\
  \label{eq:interpol-step}
  \hat p_k^{\ell} &= 2 p_k^{\ell}  - p_k^{\ell - 1} \,.
\end{align}

Then, \cite[Theorem~1]{ChaPoc2011} guarantees the convergence of $(p_k^\ell, q_k^\ell)$ as $\ell \to \infty$ to a solution of \eqref{saddlePointProblem}.
For a stopping criterion of the primal-dual iteration we consider the primal-dual gap which is for $p\in \R^{M\times N}$ and $q \in \R^{2\times M\times N}$ given by
\begin{equation*}
  \G_k(p,q) = %
  P_k(p) + Q_k(\nabla_1 p) + P_k^\ast(\diver_1 q) + Q_k^\ast(q)\,.
\end{equation*}
It vanishes if and only if  $(p,q)$ solves \eqref{saddlePointProblem}. For this reason, we stop iteration \eqref{qStep}--\eqref{eq:interpol-step} if the corresponding primal-dual gap is smaller than a certain tolerance.

We now continue with the precise computations of the primal-dual steps for the $\BV$-phase field approximation. Since $Q_k^\ast$ is the indicator function of a convex set, the update step \eqref{qStep} is the projection of $q_k^{\ell - 1} + \tau \nabla_1 \hat p_k^{\ell - 1}$ onto $\{\norm{\abs{\cdot}}_\infty \leq  \frac{\gamma s_k}{2 h} \}$ (cf. \cite[Section~6.2]{ChaPoc2011}). Thus, we simply get
\begin{equation*}
  q_k^{\ell} = \frac{ \bar q_k^\ell}{\1 \vee \frac{2 h \abs{\bar q_k^\ell}}{\gamma s_k}} \quad \text{with} \quad  \bar q_k^\ell = q_k^{\ell - 1} + \tau \nabla_1 \hat p_k^{\ell - 1} \,.
\end{equation*}
The proximal operator appearing in \eqref{pStep} can be solved directly. Namely, we get
\begin{equation*}
  0 \in \frac{1 + \tau}{\tau} p_k^\ell - \frac{1}{\tau} \bar p_k^\ell - \bar v^k - \frac{\gamma s_k}{2 \eps} \1 + \partial \chi_{\{0\leq p \leq \1\}} (p_k^\ell)
\end{equation*}
with $\bar p_k^\ell = p_k^{\ell - 1} + \tau \diver_1 q_k^{\ell}$, which yields
\begin{equation*}
  p_k^\ell = 0 \vee \biggl( \frac{\bar p_k^\ell + \tau \bar v^k + \tau \frac{\gamma s_k}{2 \eps} \1}{1 +  \tau} \biggr)  \wedge \1 \,.
\end{equation*}
The primal-dual gap for $p_k^\ell$ and $q_k^\ell$ can be computed explicitly. Taking into account that $Q^\ast (q_k^\ell) =0$ and
\begin{equation*}
	P_k^\ast (\diver_1 q_k^\ell) = \bigscprod{(p_k^\ell)'}{\diver_1 q_k^\ell}- \frac{1}{2} \Bignorm{(p_k^\ell)' - \bar v^k - \frac{\gamma s_k}{2 \eps} \1}_2^2
\end{equation*}
with
\begin{equation*}
(p_k^\ell)' = 0 \vee \biggl( \bar v^k + \frac{\gamma s_k}{2\eps} \1 + \diver_1 q_k^\ell \biggr) \wedge \1\,,
\end{equation*}
it is given by
\begin{multline*}
  \G_k (p_k^\ell, q_k^\ell)
  = \frac{\gamma s_k}{2 h} \bignorm{\abs{\nabla_1 p_k^\ell}}_1 + \bigscprod{(p_k^\ell)'}{\diver_1 q_k^\ell} \\
  + \frac{1}{2} \bigl(\norm{p_k^\ell}_2^2 - \norm{(p_k^\ell)'}_2^2 \bigr)
  - \biggscprod{p_k^\ell - (p_k^\ell)'}{\bar v^k + \frac{\gamma s_k}{2 \eps} \1} \,.
\end{multline*}
Summing up all the previous computations for our $\BV$-phase field model, we get Algorithm~\ref{alg:bv-phase-field} in the appendix, which is the numerical scheme as implemented.

\subsection*{$H^1$-model (Ambrosio-Tortorelli)}
For the elliptic approximation  we use $J$ and $K$ as in \eqref{eq:split-J-G} and only redefine $H$ by
\begin{equation*}
  H(v) \coloneq \frac{\gamma}{4 \eps} \norm{\1 - v}_2^2 + \gamma \eps \bignorm{\abs{\nabla_h v}}_2^2 + \chi_{\{0\leq v \leq \1\}} (v)
\end{equation*}
in order to obtain $\AT_\eps^h (u,v) = J(u,v) + K(u) + H(v)$. Clearly, $s_k$ and $t = t_k$ can also be chosen as before in \eqref{eq:step-size}. Hence, \eqref{eq:prox-u} results again in \eqref{eq:solving-u}. The difference of the algorithm compared to the one for the $\BV$-phase field  appears in  \eqref{eq:prox-v}, which is now equivalent to
\begin{equation*}
  v^k \in \argmin_{v \in \R^{M \times N}} \biggl( \frac{1}{2} \biggnorm{v - \frac{2 \eps \bar v^k + \gamma s_k \1}{2\eps + \gamma s_k} }_2^2 + \frac{2 \gamma \eps^2 s_k}{2\eps + \gamma s_k} \bignorm{\abs{\nabla_h v}}_2^2 + \chi_{\{0 \leq v \leq \1 \}} (v) \biggr) \,.
\end{equation*}
Since this problem is sufficiently smooth it could be easily solved directly, by solving a linear system. Nevertheless, for a better comparability and for saving the effort of solving a large linear equation,  we stay as close as possible to %
the algorithm used for the $\BV$-model.
Thus, we use again the primal-dual scheme as in \eqref{qStep}--\eqref{eq:interpol-step}, where this time we need to choose
\begin{equation*}
  P_k(v) = \frac{1}{2} \biggnorm{v - \frac{2 \eps \bar v^k + \gamma s_k \1}{2\eps + \gamma s_k} }_2^2 + \chi_{\{0 \leq v \leq \1 \}} (v)
\end{equation*}
for $v\in \R^{M\times N}$ and
\begin{equation*}
  Q_k(w) = \frac{\mu}{2} \bignorm{\abs{w}}_2^2 \quad \text{with} \quad \mu = \frac{4 \gamma \eps^2 s_k}{h^2 (2\eps + \gamma s_k)}
\end{equation*}
for $w\in \R^{2 \times M \times N}$. Note, that we have
$Q_k^\ast (w) = \frac{1}{2 \mu} \norm{\abs{w}}_2^2$
and thus \eqref{qStep} yields
\begin{equation*}
  q_k^\ell =  \frac{\mu}{\mu + \tau} \bar q_k^\ell \quad\text{with}\quad \bar q_k^\ell = q_k^{\ell-1} + \tau \nabla_1 \hat p_k^{\ell-1} \,,
\end{equation*}
and \eqref{pStep} results in
\begin{equation*}
  p_k^\ell = 0 \vee \biggl( \frac{1}{1+\tau} \bar p_k^\ell + \frac{\tau (2 \eps \bar v^k + \gamma s_k \1)}{(1 + \tau) (2 \eps + \gamma s_k)} \biggr) \wedge \1 \quad \text{with} \quad \bar p_k^\ell = p_k^{\ell - 1} + \tau \diver_1 q_k^{\ell} \,.
\end{equation*}
The primal-dual gap for this approximation is given by
\begin{multline*}
  \G_k (p_k^\ell,q_k^\ell)
  = \frac{\mu}{2} \bignorm{\abs{\nabla_1 p_k^\ell}}^2_2 + \bigscprod{\diver_1 q_k^\ell}{(p_k^\ell)'} + \frac{1}{2 \mu} \bignorm{\abs{q_k^\ell}}^2_2 \\
  + \half \bigl( \norm{p_k^\ell}^2_2 - \bignorm{(p_k^\ell)'}^2_2 \bigr) - \biggscprod{p_k^\ell - (p_k^\ell)'}{\frac{2 \eps \bar v^k + \gamma s_k \1}{2 \eps + \gamma  s_k} }
\end{multline*}
with
\begin{equation*}
  (p_k^\ell)' = 0 \vee \biggl( \frac{2 \eps \bar v^k + \gamma s_k \1}{2 \eps + \gamma s_k} + \diver_1 q_k\biggr) \wedge \1 \,.
\end{equation*}
Altogether, this yields Algorithm~\ref{alg:h1-phase-field} in the appendix, which is the numerical scheme that we use for computations.

\subsection*{Numerical Results}
With the presented algorithms we perform computations for two different images. For all numerical examples we fix the width of the images to $1$.
The pixel size $h$ then depends on the number of pixels and is given by $h = \frac{L}{\text{\emph{number of horizontal pixels}}}$.

\begin{figure}[tbp]
  \centering
  \begin{minipage}{0.48\textwidth}
    \centering
    \includegraphics[width=\textwidth]{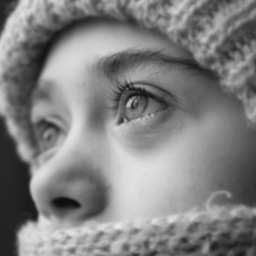}\\
    \footnotesize{original image\footnotemark}
  \end{minipage}%
  \hspace{1em}
  \begin{minipage}{0.48\textwidth}
    \centering
    \includegraphics[width=\textwidth]{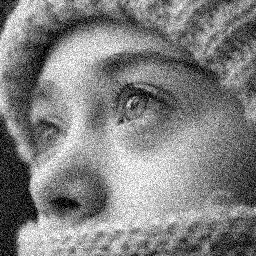}\\
    \footnotesize{noisy image}
  \end{minipage}%
  \caption{Input image with 256 $\times$ 256 pixels for the computations shown in Figure~\ref{fig:face}.}
  \label{fig:face-input-data}
\end{figure}

\begin{table}[tbp]
  \centering
  \setlength{\extrarowheight}{2pt}
  \caption{Numerical parameters}
  \label{tab:parameter}
  \begin{tabular}{ccccccc}
    \hline
    $\alpha$ & $\beta$ & $\gamma$ & $\theta$ & $Tol_1$ & $Tol_2$ & $MaxIt$\\
    \hline
    $1.75 \cdot 10^{-4}$ & $1$ & $3 \cdot 10^{-5}$ & $0.99$  & $10^{-3}$ & $10^{-5}$ & $10000$\\
    \hline
  \end{tabular}
\end{table}

For the first computation we use the noisy image from Figure~\ref{fig:face-input-data}. The latter is generated by adding Gaussian noise of standard deviation $0.1$ and clipping the result to the original image range $[0,1]$. In this computation, the input image $g$ corresponds to this noisy image and we only change the approximating variable $\eps$, in order to investigate its influence, while fixing the other parameters for %
the algorithms
as indicated in Table~\ref{tab:parameter}. The result can be observed in Figure~\ref{fig:face}.

\footnotetext{photo credit: Irina Patrascu Gheorghita: alina's eye \alinaseye ~License: CC-BY 2.0 \license}

\begin{figure}[tbp]
  \centering
  \begin{minipage}{0.48\textwidth}
    \centering
    \setlength{\parskip}{6pt}
    \footnotesize{$\BV$-model}

    \includegraphics[width=0.48\textwidth]{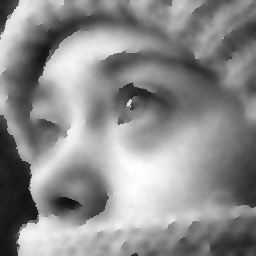}%
    \hfill
    \includegraphics[width=0.48\textwidth]{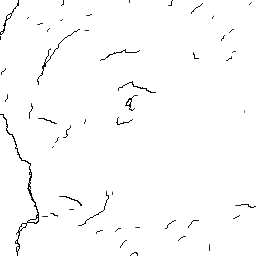}\\
    \footnotesize{$\eps = 5 \cdot 10^{-4}$}

    \includegraphics[width=0.48\textwidth]{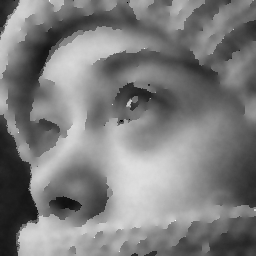}%
    \hfill
    \includegraphics[width=0.48\textwidth]{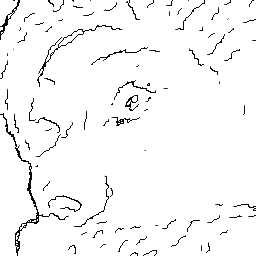}\\
    \footnotesize{$\eps = 1 \cdot 10^{-3}$}

    \includegraphics[width=0.48\textwidth]{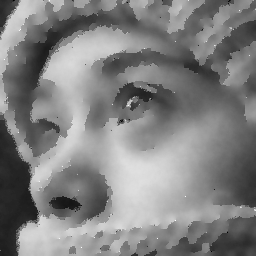}%
    \hfill
    \includegraphics[width=0.48\textwidth]{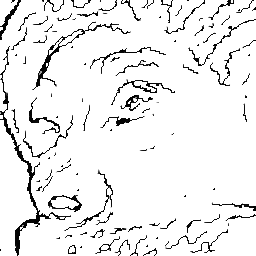}\\
    \footnotesize{$\eps = 2 \cdot 10^{-3}$}

    \includegraphics[width=0.48\textwidth]{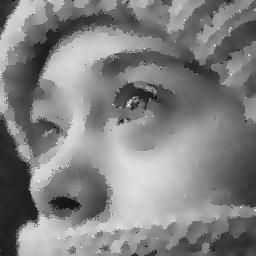}%
    \hfill
    \includegraphics[width=0.48\textwidth]{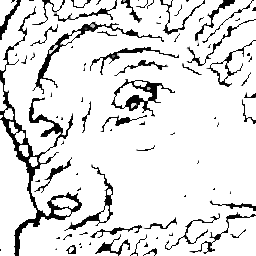}\\
    \footnotesize{$\eps = 3 \cdot 10^{-3}$}

    \includegraphics[width=0.48\textwidth]{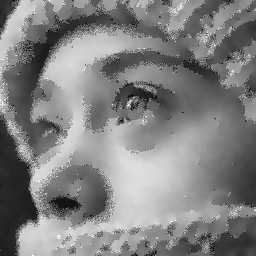}%
    \hfill
    \includegraphics[width=0.48\textwidth]{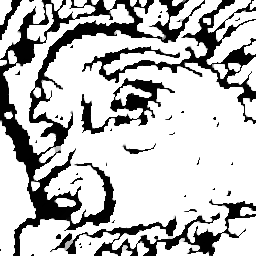}\\
    \footnotesize{$\eps = 5 \cdot 10^{-3}$}
  \end{minipage}%
  \hspace{1em}
  \begin{minipage}{0.48\textwidth}
    \centering
    \setlength{\parskip}{6pt}
    \footnotesize{$H^1$-model}

    \includegraphics[width=0.48\textwidth]{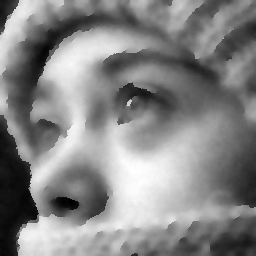}%
    \hfill
    \includegraphics[width=0.48\textwidth]{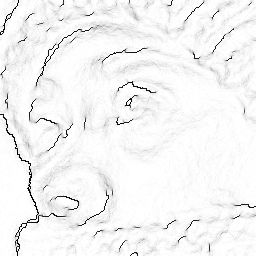}\\
    \footnotesize{$\eps = 2 \cdot 10^{-4}$}

    \includegraphics[width=0.48\textwidth]{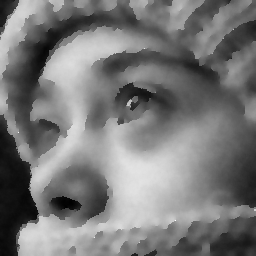}%
    \hfill
    \includegraphics[width=0.48\textwidth]{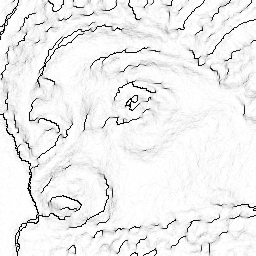}\\
    \footnotesize{$\eps = 3 \cdot 10^{-4}$}

    \includegraphics[width=0.48\textwidth]{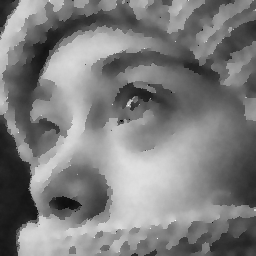}%
    \hfill
    \includegraphics[width=0.48\textwidth]{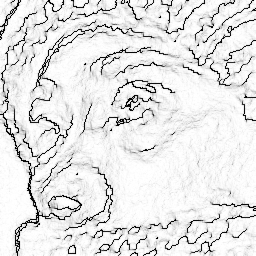}\\
    \footnotesize{$\eps = 5 \cdot 10^{-4}$}

    \includegraphics[width=0.48\textwidth]{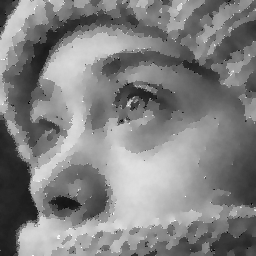}%
    \hfill
    \includegraphics[width=0.48\textwidth]{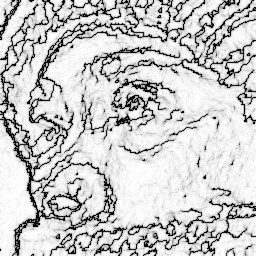}\\
    \footnotesize{$\eps = 1 \cdot 10^{-3}$}

    \includegraphics[width=0.48\textwidth]{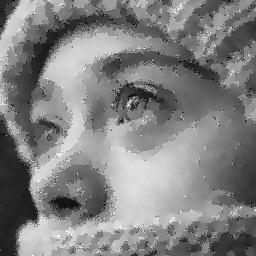}%
    \hfill
    \includegraphics[width=0.48\textwidth]{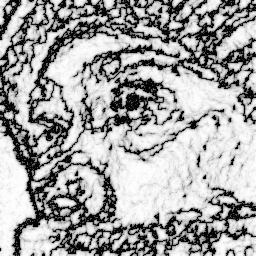}\\
    \footnotesize{$\eps = 1.5 \cdot 10^{-3}$}
  \end{minipage}%
  \caption{Numerical result for different values of $\eps$. The other parameters are given in Table~\ref{tab:parameter}.}
  \label{fig:face}
\end{figure}

One can clearly see that the $\BV$-model produces almost binary phase fields, i.e. $v$ takes only the values $0$ (corresponding to a black pixel) and $1$ (corresponding to a white pixel). In other words these phase fields are much sharper than the ones produced by the $H^1$-model. Moreover, we observe that $\eps$ can be chosen larger when using the $\BV$-model in order to obtain a result that is comparable to the $H^1$-model.

\begin{figure}[tbp]
  \centering
  \begin{minipage}{\textwidth}
    \centering
    \includegraphics[width = 0.48\textwidth]{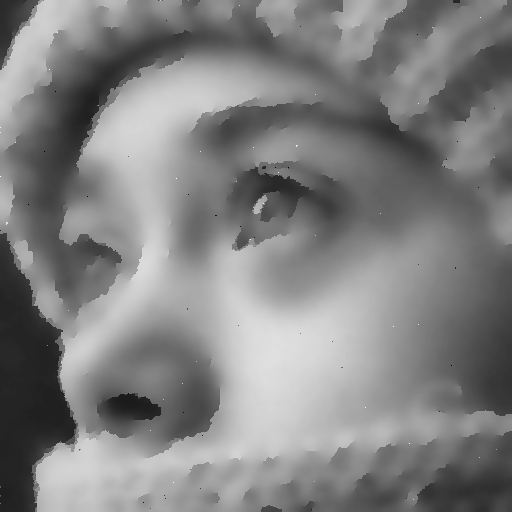}%
    \hspace{1em}
    \includegraphics[width = 0.48\textwidth]{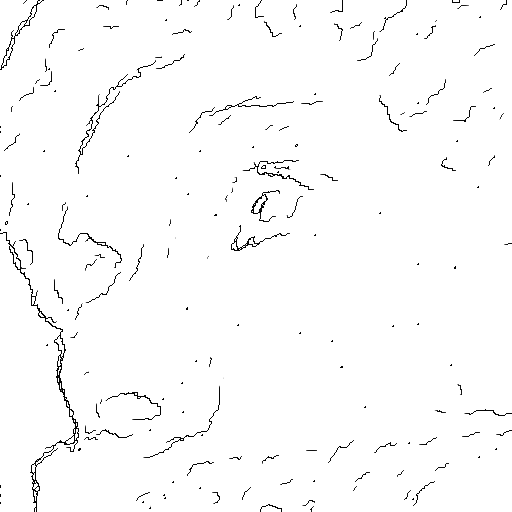}
    \footnotesize{$\BV$-model with $\eps = 5 \cdot 10^{-4}$}
  \end{minipage}\\[1em]
  \begin{minipage}{\textwidth}
    \centering
    \includegraphics[width = 0.48\textwidth]{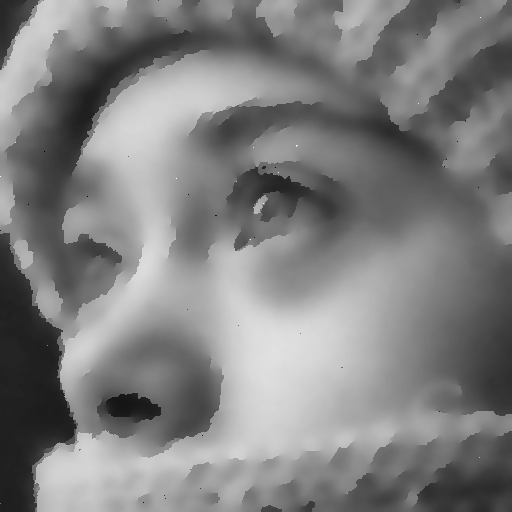}%
    \hspace{1em}
    \includegraphics[width = 0.48\textwidth]{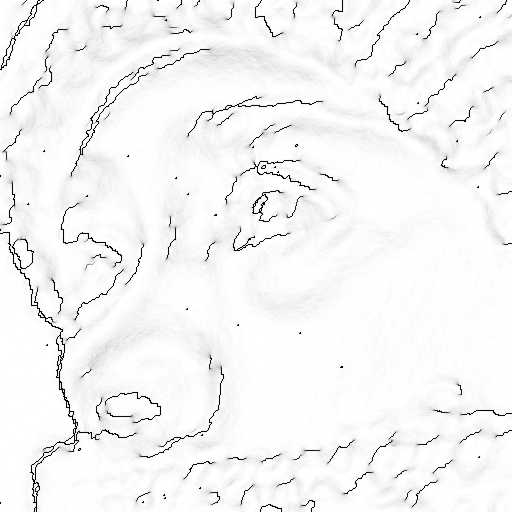}
    \footnotesize{$H^1$-model with $\eps = 2 \cdot 10^{-4}$}
  \end{minipage}%
  \caption{Result of a segmentally denoised image with 512 $\times$ 512 pixels using parameters from Table~\ref{tab:parameter}.}
  \label{fig:large-face}
\end{figure}

Besides the comparison of the two models one can also observe, that in both approximations of the Mumford-Shah functional, only few edges are detected if $\eps$ is too small.  Whereas, if $\eps$ is relatively large, the contours become rather wide. These effects are well-known and have already been mentioned in Remark~\ref{rmrk:choice-of-eps}, from which we also expect that for small values of $\eps$, the phase field may detect the edges again, when reducing $h$. Also this can be confirmed from Figure~\ref{fig:large-face}, where we use the same image but this time with 512 $\times$ 512 pixels keeping the width of the image domain fixed to $1$ as above, resulting in the value of $h$ being halved.

Figure~\ref{fig:sailing} shows another picture with 512 $\times$ 512 pixel size. To the original image we again add Gaussian noise (noise level: $0.1$). This noisy image serves as the input data $g$ for our algorithms. Besides $\alpha$ and $\gamma$, the parameters have a been chosen like in Table~\ref{tab:parameter}.

\afterpage{\clearpage
\begin{figure}[htbp]
  \centering
  \setlength{\parskip}{6pt}
  \begin{minipage}{0.42\textwidth}
    \centering
    \includegraphics[width=\textwidth]{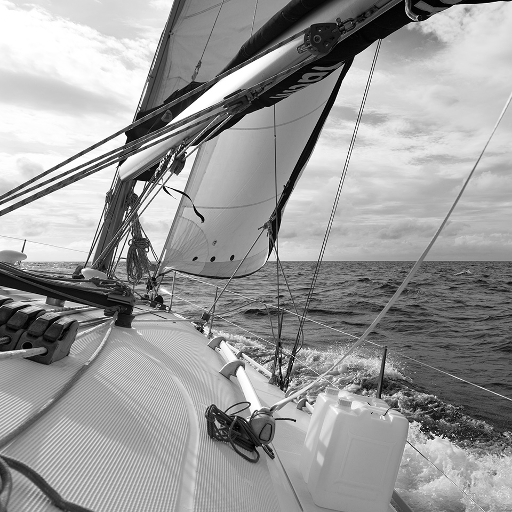}\\
    \footnotesize{original image\footnotemark}
  \end{minipage}%
  \hspace{1em}
  \begin{minipage}{0.42\textwidth}
    \centering
    \includegraphics[width=\textwidth]{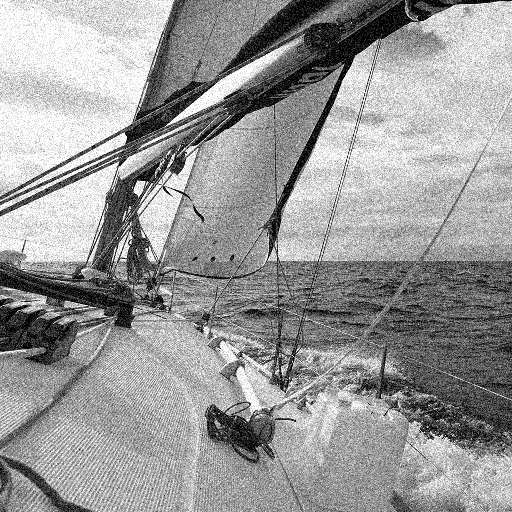}\\
    \footnotesize{noisy image}
  \end{minipage}

  \begin{minipage}{\textwidth}
    \centering
    \setlength{\parskip}{6pt}
    \includegraphics[width=0.42\textwidth]{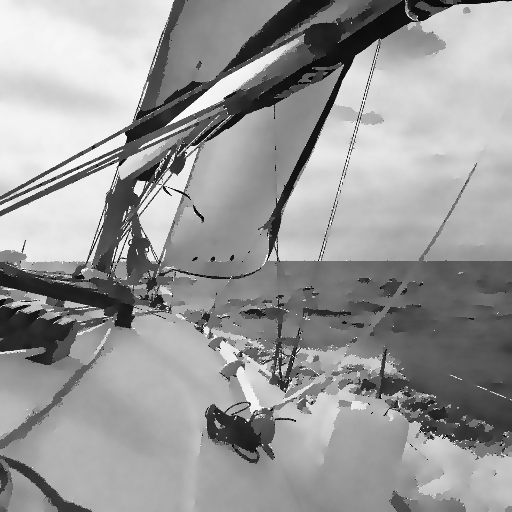}%
    \hspace{1em}
    \includegraphics[width=0.42\textwidth]{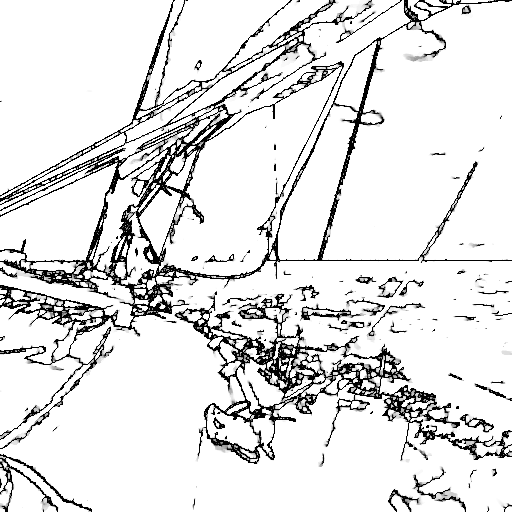}\\
    \footnotesize{$\BV$-model with $\eps = 1 \cdot 10^{-3}$}

    \includegraphics[width=0.42\textwidth]{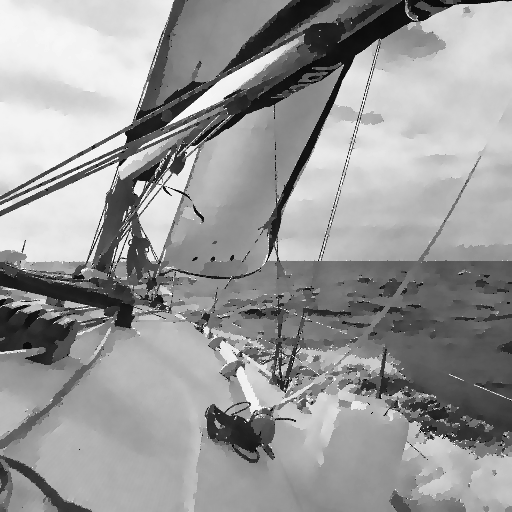}%
    \hspace{1em}
    \includegraphics[width=0.42\textwidth]{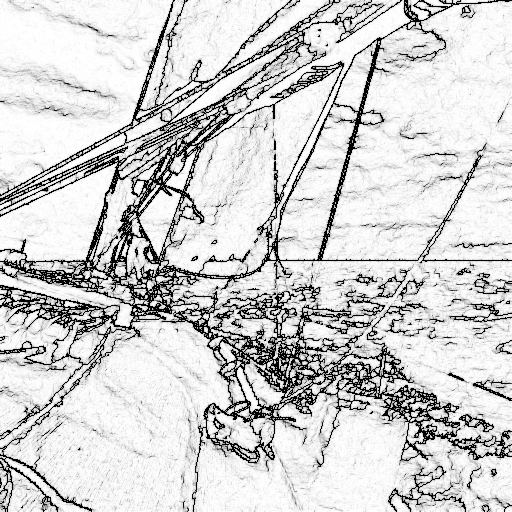}\\
    \footnotesize{$H^1$-model with $\eps = 3\cdot 10^{-4}$}
  \end{minipage}%
  \caption{Image with 512 $\times$ 512 pixels. Computation for $\alpha = 10^{-4}$, $\gamma = 5 \cdot 10^{-6}$ and the other parameters as specified in Table~\ref{tab:parameter}.}
  \label{fig:sailing}
\end{figure}
\footnotetext{photo credit: Phuketian.S: \emph{Sailing from Thailand to Malaysia. Our yacht at the sea} \sailing ~License: CC-BY 2.0 \license}
}

\clearpage
\section*{Acknowledgements}
This work was supported by the International Research Training Group IGDK 1754 ``Optimization and Numerical Analysis for Partial Differential Equations with Nonsmooth Structures'', funded by the German Research Council (DFG) and the Austrian Science Fund (FWF):[W 1244-N18]. 

\bibliographystyle{abbrv}
\bibliography{Bibliography.bib}

\appendix
\section{Auxiliary statements}
\begin{lemma}
  \label{lem:generalized_jensen}
  Let $\mu$ be a signed Radon measure on $\R$,
  $\psi: \R \to [0,\infty]$ a proper, convex and lower semi-continuous function and $\eta \in C_c^\infty(\R; [0,\infty))$ a mollifier, i.e., $\int_\R \eta\,\dd{x} = 1$. Then,
  \[
    \int_\R \psi\bigl( (\mu \ast \eta) \bigr) \,\dd{x}
    \leq \int_{\R} \psi\Bigl(\frac{\dd{\mu}}{\dd{\LL^1}} \Bigr) \,\dd{x} +
      \int_{\R} \psi^\infty\Bigl( \frac{\dd{\mu^s}}{\dd{\abs{\mu^s}}} \Bigr) \,\dd{\abs{\mu^s}}
  \]
  where $\mu = \frac{\dd{\mu}}{\dd{\LL^1}} \LL^1 + \mu^s$ denotes
  the Lebesgue decomposition of $\mu$ and $\psi^\infty(s) = \lim_{t \to \infty} \frac{\psi(st)}{t}$ is the recession function of $\psi$.
\end{lemma}

\begin{proof}
  Fix $x \in \R$, $t > 0$
  and choose $\abs{\mu}_{x,t}
  = \eta(x - \cdot) ( \LL^1 + \frac1t \abs{\mu^s})$ as well
  as $\mu_x = \eta(x - \cdot) \mu$.
  Then, $\abs{\mu}_{x,t}(\R) = 1 + \frac1t \int_\R \eta(x - \cdot) \,\dd{\abs{\mu^s}}$ and
  $\mu_x = \frac{\dd{\mu}}{\dd{\LL^1}} \eta(x - \cdot) \LL^1
  + t \frac{\dd{\mu^s}}{\dd{\abs{\mu^s}}} \frac1t \eta(x -\cdot) \abs{\mu^s}$,
  such that Jensen's inequality yields
  \[
    \begin{aligned}
    &\psi\Bigl(\frac{(\mu \ast \eta)(x)}{\abs{\mu}_{x,t}(\R)} \Bigr) =
    \psi\Bigl( \frac1{\abs{\mu}_{x,t}(\R)} \int_\R \frac{\dd{\mu_x}}{\dd{\abs{\mu}_{x,t}}} \dd{\abs{\mu}_{x,t}} \Bigr) \\
    &\quad \leq
    \frac1{\abs{\mu}_{x,t}(\R)} \Bigl(
    \int_{\R} \psi\Bigl(\frac{\dd{\mu}}{\dd{\LL^1}}(y) \Bigr) \eta(x - y) \,\dd{y} +
      \int_{\R} \frac1t \psi\Bigl( t \frac{\dd{\mu^s}}{\dd{\abs{\mu^s}}}(y) \Bigr) \eta(x - y) \,\dd{\abs{\mu^s}(y)}
    \Bigr).
  \end{aligned}
  \]
  Since $\frac{\dd{\mu^s}}{\dd{\abs{\mu^s}}}$ is either $1$ or $-1$ $\abs{\mu^s}$-almost everywhere, the rightmost integral reads as $\frac1t \psi(t)
  \int_{I_+} \eta(x - \cdot) \,\dd{\abs{\mu^s}}
  + \frac1t \psi(-t) \int_{I_-} \eta(x-\cdot) \,\dd{\abs{\mu^s}}$ where
  $I_+ = \bigl\{x \in \R \colon \frac{\dd{\mu^s}}{\dd{\abs{\mu^s}}}(x) = 1 \bigr\}$ and $I_- = \bigl\{x \in \R \colon \frac{\dd{\mu^s}}{\dd{\abs{\mu^s}}}(x) = -1 \bigr\}$. Clearly, as $t \to \infty$, this expression converges
  to $\int_\R \psi^\infty\bigl( \frac{\dd{\mu^s}}{\dd{\abs{\mu^s}}} \bigr)
  \, \eta(x - \cdot) \,\dd{\abs{\mu^s}}$ (possibly to $\infty$).
  Since $\lim_{t \to \infty} \abs{\mu}_{x,t}(\R) = 1$,
  by lower semi-continuity of $\psi$,
  \[
    \begin{aligned}
      \psi\bigl( (\mu \ast \eta)(x) \bigr)
      &\leq \liminf_{t \to \infty} \psi\Bigl(\frac{(\mu \ast \eta)(x)}{\abs{\mu}_{x,t}(\R)} \Bigr) \\
      &\leq
      \int_{\R} \psi\Bigl(\frac{\dd{\mu}}{\dd{\LL^1}} (y) \Bigr) \eta(x-y) \,\dd{y} +
      \int_{\R} \psi^\infty\Bigl( \frac{\dd{\mu^s}}{\dd{\abs{\mu^s}}}(y) \Bigr)
      \eta(x-y) \,\dd{\abs{\mu^s}}(y).
    \end{aligned}
  \]
  Integrating both sides over $\R$ with respect to $x$ and interchanging
  order on the right-hand side then yields the result.
\end{proof}

\begin{lemma}
  \label{lem:tv_compose_est}
  Let $I := (a,b) \subset \R$ be a bounded open interval,
  $v \in \BV(I; [0,1])$ and $\eps > 0$. Then,
  \[
    \abs{\D(\Phi_\varepsilon \circ v)}(I) \leq
    \int_a^b \varphi_\varepsilon \bigl( W_\varepsilon (v) \bigr) \,\dd{x}
    + \int_a^b \psi_\varepsilon(\abs{v'}) \,\dd{x} + c_\varepsilon
    \bigl( \abs{\D^j v}(I) + \abs{\D^c v}(I) \bigr)
  \]
  with $W_\eps$, $\varphi_\eps$ and $\psi_\eps$
  according to~\ref{assumptionW}, \ref{assumptionPhi} and~\ref{assumptionPsi},
  respectively, and $\Phi_\varepsilon(s) = \int_0^s W_\varepsilon(t) \,\dd t$
  for $s \in [0,1]$.
\end{lemma}

\begin{proof}
  Denote by $v(a) =
  \lim_{\rho \to 0} \frac1\rho \int_a^{a+\rho} v(x) \,\dd{x}$ and $v(b) = \lim_{\rho \to 0} \frac1\rho \int_{b-\rho}^b v(x) \,\dd{x}$ and extend
  $v$ outside of $I$ by $v(x) = v(a)$ for $x \leq a$ and $v(x) = v(b)$ for $x \geq b$. Then, $v \in \BV_{loc}(\R)$ with $\D v$ the zero extension
  of $\D v$ on $I$.
  Choose a mollifier $\eta \in C_c^\infty(\R; [0,\infty))$, $\int_\R \eta \,\dd{x}=1$ and denote by $\eta_\delta(x) = \frac1\delta \eta(\frac{x}{\delta})$ for $\delta > 0$. Then, each $v_\delta = v \ast \eta_\delta$ is in
  $C^\infty(\overline{I}; [0,1])$ and by classical differentiation,
  the Fenchel inequality and \ref{assumptionPsi},
  \[
    \begin{aligned}
      \abs{\D(\Phi_\varepsilon \circ v_\delta)}(I) &=
      \int_a^b W_\varepsilon (v_\delta) \abs{v_\delta'} \,\dd{x}
      \leq \int_a^b \varphi_\varepsilon \bigl( W_\varepsilon(v_\delta) \bigr) +
      \varphi_\varepsilon^*(\abs{v_\delta'}) \,\dd{x} \\
      & \leq \int_a^b \varphi_\varepsilon\bigl(W_\varepsilon(v_\delta) \bigr) \,\dd{x}
      + \int_\R \psi_\varepsilon(\abs{v_\delta'}) \,\dd{x}.
    \end{aligned}
  \]
  We have $v_\delta \to v$ in $L^1(I)$ as $\delta \to 0$, so by
  continuity of $W_\varepsilon$ and $\varphi_\varepsilon$ (as a consequence of
  convexity and finiteness on $W_\varepsilon([0,1])$), one can
  conclude that
  $\int_a^b \varphi_\varepsilon\bigl(W_\varepsilon(v_\delta) \bigr) \,\dd{x} \to
  \int_a^b \varphi_\varepsilon\bigl(W_\varepsilon(v) \bigr) \,\dd{x}$ as $\delta \to 0$. Denoting by $\tilde \psi_\varepsilon(t) = \psi_\varepsilon(\abs{t})$ for
  $t \in \R$ yields a convex function since $\psi_\varepsilon$ is increasing on $[0,\infty)$, so applying Lemma~\ref{lem:generalized_jensen} yields
  \[
    \begin{aligned}
      \int_\R \psi_\varepsilon(\abs{v_\delta'}) \,\dd{x}
      = \int_\R \tilde \psi_\varepsilon\bigl( (\D v \ast \eta_\delta ) \bigr) \,\dd{x}
      &\leq \int_\R \tilde\psi_\varepsilon(v') \,\dd{x} + \int_\R \tilde\psi_\varepsilon^\infty\Bigl(\frac{\dd{\D^s v}}{\dd{\abs{\D^s v}}} \Bigr) \,\dd{\abs{\D^s v}} \\
      &= \int_a^b \psi_\varepsilon(\abs{v'}) \,\dd{x} +
      c_\eps\bigl( \abs{\D^j v}(I) + \abs{\D^c v}(I) \bigr).
    \end{aligned}
  \]
  By continuity
  of $\Phi_\varepsilon$, one can further conclude that $\Phi_\varepsilon \circ v_\delta \to \Phi_\varepsilon \circ v$ in $L^1(I)$ as $\delta \to 0$. The above then implies
  that $\D(\Phi_\varepsilon \circ v_\delta)$ as a sequence of $\delta$
  is bounded in the space of Radon measures on $I$, yielding a weak*-convergent subsequence. By strong-weak*-closedness of the weak derivative, the limit
  has to coincide with $\D(\Phi_\varepsilon \circ v)$. This holds for each subsequence, such that in fact, $\D(\Phi_\varepsilon \circ v_\delta)$ converges weakly* to $\D(\Phi_\varepsilon \circ v)$ as $\delta \to 0$.
  Thus, using weak* lower semi-continuity, we obtain
  \[
    \begin{aligned}
      \abs{\D(\Phi_\varepsilon \circ v)}(I) &\leq
      \liminf_{\delta \to 0} \ \abs{\D(\Phi_\varepsilon \circ v_\delta)}(I) \\
      &\leq \lim_{\delta \to 0} \int_a^b \varphi_\varepsilon\bigl( W_\varepsilon(v_\delta) \bigr) \,\dd{x}
      \int_a^b \psi_\varepsilon(\abs{v'}) \,\dd{x} +
      c_\eps\bigl( \abs{\D^j v}(I) + \abs{\D^c v}(I) \bigr),
    \end{aligned}
  \]
  which, together with the above, yields the desired estimate.
\end{proof}

\clearpage
\section{Pseudo Codes}
\label{sec:pseudo-code}
\begin{algorithm}[htbp]
  \caption{$\BV$-model}
  \label{alg:bv-phase-field}
  \begin{algorithmic}[1]
    \everymath{\displaystyle}
    \setlength{\parskip}{3pt}
    \State $u \leftarrow g$, $v \leftarrow 1$, $q \leftarrow 0$
    \State $t \leftarrow \frac{h^2}{8 \alpha}$, $\tau \leftarrow \frac{1}{\sqrt{8}}$
    \State $it \leftarrow 0$
    \Repeat
      \State $it \leftarrow it + 1$
      \State $u_0 \leftarrow u$, $v_0 \leftarrow v$
      \State $u \leftarrow \frac{u + t \alpha \diver_h( v^2 \nabla_h u) + t\beta g}{1 + t\beta}$
      \State $\displaystyle s \leftarrow \frac{\theta}{\alpha \norm{\abs{\nabla_h u}^2}_\infty}$
      \State $p \leftarrow v$, $ \hat p \leftarrow v$
      \State $\bar v \leftarrow v - s \alpha v \abs{\nabla_h u}^2$
      \Repeat
        \State $p_0 \leftarrow p$
        \State $\bar q \leftarrow q + \tau \nabla_1 \hat p$
        \State $q \leftarrow \frac{\bar q}{1 \vee \frac{2 h}{\gamma s} \abs{\bar q}}$
        \State $\bar p \leftarrow  p + \tau \diver_1 q$
        \State $p \leftarrow 0 \vee \biggl( \frac{\bar p + \tau \bar v + \frac{\gamma \tau s}{2 \eps} \1}{1 +  \tau} \biggr)  \wedge \1$
        \State $p' \leftarrow 0 \vee \biggl( \bar v + \frac{\gamma s}{2\eps} \1 + \diver_1 q \biggr) \wedge \1$
        \State $\hat p \leftarrow 2p - p_0$
        \State $gap \leftarrow  \frac{1}{2} \bigl(\norm{p}_2^2 - \norm{p'}_2^2 \bigr)-  \biggscprod{p - p'}{\bar v + \frac{\gamma s}{2 \eps} \1}$
      \Until{$gap + \frac{\gamma s}{2h} \bignorm{\abs{\nabla_1 p}}_1 + \scprod{\diver_1 q}{p'} \leq Tol_2$}
      \State $v \leftarrow p$
    \Until{$\max\bigl\{\norm{v - v_0}_\infty, \norm{u - u_0}_\infty\bigr\} \leq Tol_1$ \textbf{or} $it = MaxIt$}
  \end{algorithmic}
\end{algorithm}

\begin{algorithm}[htbp]
  \caption{$H^1$-model}
  \label{alg:h1-phase-field}
  \begin{algorithmic}[1]
    \everymath{\displaystyle}
    \setlength{\parskip}{3pt}
    \State $u \leftarrow g$, $v \leftarrow 1$, $q \leftarrow 0$
    \State $t \leftarrow \frac{h^2}{8 \alpha}$, $\tau \leftarrow \frac{1}{\sqrt{8}}$
    \State $it \leftarrow 0$
    \Repeat
      \State $it \leftarrow it +1$
      \State $u_0 \leftarrow u$, $v_0 \leftarrow v$
      \State $u \leftarrow \frac{u + t \alpha \diver_h( v^2 \nabla_h u) + t\beta g}{1 + t\beta}$
      \State $\displaystyle s \leftarrow \frac{\theta}{\alpha \norm{\abs{\nabla_h u}^2}_\infty}$
      \State $p \leftarrow v$, $ \hat p \leftarrow v$
      \State $\bar v \leftarrow v - s \alpha v \abs{\nabla_h u}^2$
      \State $\mu \leftarrow \frac{4 \gamma \eps^2 s}{h^2 (2\eps + \gamma s)}$
      \Repeat
        \State $p_0 \leftarrow p$
        \State $\bar q \leftarrow q + \tau \nabla_1 \hat p$
        \State $q \leftarrow \frac{\mu}{\mu + \tau} \bar q$
        \State $p \leftarrow 0 \vee \biggl( \frac{1}{1+\tau} \bar p + \frac{\tau (2 \eps \bar v + \gamma s \1)}{(1 + \tau) (2 \eps + \gamma s)} \biggr) \wedge \1 $
        \State $p' \leftarrow 0 \vee \biggl( \frac{2 \eps \bar v + \gamma s \1}{2 \eps + \gamma s} + \diver_1 q\biggr) \wedge \1$
        \State $\hat p \leftarrow 2p - p_0$
        \State $gap \leftarrow  \frac{1}{2} \bigl( \norm{p}^2_2 - \norm{p'}^2_2 \bigr) - \biggscprod{p-p'}{\frac{2 \eps \bar v + \gamma s \1}{2 \eps s + \gamma s}}$
      \Until{$gap + \frac{\mu}{2} \bignorm{\abs{\nabla_1 p}}^2_2 + \scprod{\diver_1 q}{p'} + \frac{1}{2 \mu} \bignorm{\abs{q}}^2_2 \leq Tol_2$}
      \State $v \leftarrow p$
    \Until{$\max\bigl\{\norm{v - v_0}_\infty, \norm{u - u_0}_\infty\bigr\} \leq Tol_1$ \textbf{or} $it = MaxIt$}
  \end{algorithmic}
\end{algorithm}

\end{document}